\documentclass[11pt,letterpaper]{amsart}
\usepackage{mathtools}
\usepackage{mathrsfs}
\usepackage[OT2, T1]{fontenc}
\usepackage{url}
\usepackage{amsmath}
\usepackage{array}
\usepackage{graphicx}
\usepackage{amsfonts}
\usepackage{amssymb}
\usepackage{amstext}
\usepackage{amsthm}
\usepackage{hyperref}
\usepackage{enumerate}
\usepackage[new]{old-arrows}
\usepackage{accents}

\hypersetup{
    colorlinks=true,
    linkcolor=blue,
    citecolor=blue,
    urlcolor=teal
}

\usepackage{colonequals}
\usepackage{enumitem}
\usepackage[all,cmtip]{xy}

\usepackage{tikz} 
\usepackage{tikz-cd}

\newcommand{\Alb}{\textnormal{Alb}}

\newcommand{\Jac}{\text{Jac}}

\newcommand{\et}{\textnormal{\'et}}

\newcommand{\ZZ}{\mathbb{Z}}

\newcommand{\CH}{\textnormal{CH}}

\newcommand{\Pic}{\textnormal{Pic}}

\newcommand{\NS}{\textnormal{NS}}

\newcommand{\pr}{\textnormal{pr}}
\newcommand{\ca}[1]{{\mathcal{#1}}}

\DeclareSymbolFont{bbm}{U}{bbm}{m}{n}
\DeclareSymbolFontAlphabet{\mathbbm}{bbm}
\numberwithin{equation}{section}

\newtheorem{theorem}{Theorem}[section]
\newtheorem{lemma}[theorem]{Lemma}

\newtheorem{corollary}[theorem]{Corollary}

\newtheorem{ques}[theorem]{Question}

\theoremstyle{definition}
\newtheorem{definition}[theorem]{Definition}

\newtheorem{remark}[theorem]{Remark}

\theoremstyle{remark}

\newtheorem{example}[theorem]{Example}
\newtheorem{step}{Step}

\newcommand{\Q}{\mathbb{Q}}

\newcommand{\Z}{\mathbb{Z}}

\newcommand{\bbC}{\mathbf{C}}

\newcommand{\bbF}{\mathbf{F}}

\newcommand{\bbP}{\mathbf{P}}
\newcommand{\bbQ}{\mathbf{Q}}

\newcommand{\tf}{\text{tf}}

\newcommand{\cl}{\text{cl}}

\DeclareMathOperator{\al}{alb}

\newcommand{\Ch}{\operatorname{CH_{0}}}

\DeclareMathOperator{\tor}{tor}

\DeclareMathOperator{\Gal}{Gal}
\DeclareMathOperator{\End}{End}
\DeclareMathOperator{\Hom}{Hom}

\DeclareMathOperator{\Spec}{Spec}

\DeclareMathOperator{\Id}{Id}

\DeclareMathOperator{\coker}{coker}

\DeclareMathOperator{\jac}{Jac}
\DeclareMathOperator{\Gi}{G}

\usepackage[backend=bibtex,style=alphabetic]{biblatex}

\begin{document}
\title[A surface with representable $\Ch$ but no universal zero-cycle]{A surface with representable $\Ch$-group but no universal zero-cycle}

\author{Theodosis Alexandrou} 
\address{Institut für Mathematik, Humboldt-Universität zu Berlin, Rudower Chaussee 25, 10099 Berlin, Deutschland.}
\email{theodosis.alexandrou@hu-berlin.de}

\date{\today} 

\subjclass[2020]{primary 14C25, 14C30, secondary 14J27}

\keywords{algebraic cycles, degenerations of algebraic surfaces, integral Hodge conjecture} 

\begin{abstract}
We introduce a new obstruction to the existence of a universal $0$-cycle on a smooth projective complex variety. As an application, we construct a smooth projective complex surface whose Chow group of $0$-cycles is representable but which does not admit a universal $0$-cycle. This provides a two-dimensional analogue of Voisin’s recent threefold counterexample to a question of Colliot-Thélène. As a further consequence, we exhibit the first example of a smooth projective threefold of Kodaira dimension zero carrying a non-torsion Hodge class of degree $4$ that is not algebraic. The construction relies on the geometry of bielliptic surfaces of type $2$.
\end{abstract}

\maketitle

\section{Introduction}
Let $X$ be a smooth projective complex variety of dimension $d=\dim X$. Given a base-point $x_{0}\in X(\bbC),$ the universal morphism \begin{equation*}
    \al_{X}\colon X\longrightarrow\Alb(X)
\end{equation*}
to the Albanese variety of $X$ (see \cite[Theorem 1.4.4]{Murre}) induces the so-called Abel--Jacobi map for $0$-cycles on $X$ \begin{equation}
    \label{eq:abel-jacobi-map} \alpha_{X}\colon\Ch(X)_{\hom}\longrightarrow\Alb(X).
\end{equation}
The group homomorphism $\alpha_{X}$ no longer depends on the choice of the base-point and it is known to be surjective and regular. The latter asserts that, for any smooth projective complex variety $Z$ equipped with a base-point $z_{0}\in Z(\bbC)$, and for any codimension-$d$ cycle $[\Gamma]\in\CH^{d}(Z\times X)$, the assignment
\begin{equation}\label{eq:psi}
    \psi_{(Z,[\Gamma])}\colon Z\longrightarrow\Alb(X),\quad z\longmapsto \alpha_{X}\circ[\Gamma]_{\ast}(z-z_{0})
\end{equation}
defines a morphism of algebraic varieties; see \cite[Definition 1.6.1]{Murre}. In fact, the Abel--Jacobi map $\alpha_{X}$ also enjoys the universal property of being initial among all regular homomorphisms from $\Ch(X)_{\hom}$ to abelian varieties; see \cite[Example 1.8(b)]{Murre}.
\par Since the homomorphism $\alpha_{X}$ in \eqref{eq:abel-jacobi-map} is regular and surjective, there exists a codimension-$d$ cycle $[\Gamma]\in\CH^{d}(\Alb(X)\times X)$ and a positive integer $n$ such that the morphism $\psi_{(\Alb(X),[\Gamma])}$ in \eqref{eq:psi}, with respect to the chosen base-point $0_{\Alb(X)}\in\Alb(X)(\bbC)$, is equal to $n\cdot\Id_{\Alb(X)}$; see \cite[Corollary 1.6.3]{Murre}.\par

The present work is concerned with a strengthening of this statement, in the sense of the following property of the variety $X$, first introduced and studied by Voisin; see \cite{Voisin2024CycleCO,Voisin2025}.

\begin{definition}[\cite{voisin_a}, Definition 1.1]\label{def:universal_0-cycle} We say that $X$ admits a \emph{universal $0$-cycle} if there exists a codimension-$d$ cycle $[\Gamma]\in\CH^{d}(\Alb(X)\times X)$ such that the morphism $\psi_{(\Alb(X),[\Gamma])}$ in \eqref{eq:psi}, with respect to the base-point $0_{\Alb(X)}\in\Alb(X)(\bbC)$, is the identity map $\Id_{\Alb(X)}$ on the Albanese variety of $X$. 
\end{definition}

If $X$ is a curve, then the Poincaré divisor on $\Jac(X)\times X$ induces a universal $0$-cycle on $X$. In contrast, Voisin showed in \cite{Voisin2025} that for each $d\geq2$, there exists a smooth projective complex $d$-fold $X$ admitting no universal $0$-cycle; see \cite[Corollary  0.14]{Voisin2025}.\par
In \cite{colliotthélène2025notessurlapplicationdalbanese}, Colliot--Th\'{e}l\`ene considered whether the existence of a universal $0$-cycle persists under additional geometric assumptions on a smooth projective complex variety $X$; see \cite[Question 3.6]{colliotthélène2025notessurlapplicationdalbanese}. In particular, he asked whether this property holds for varieties with representable $\Ch$-group, that is, for which the homomorphism $\alpha_{X}$ in \eqref{eq:abel-jacobi-map} is an isomorphism; see \S \ref{subsec:var_repr_Ch}. Building on earlier ideas of Benoist and Ottem \cite{benoist-ottem}, Voisin subsequently constructed in \cite{voisin_a} a smooth projective complex threefold $X$ with representable $\Ch$-group but admitting no universal $0$-cycle.\par

This leads to the following question, raised by Totaro after Voisin’s talk at the workshop \emph{Hodge Theory and Algebraic Cycles} (Clay Mathematics Institute, September 2025).

\begin{ques}\label{ques:2-dim_example}
Does there exist a smooth projective complex surface $S$ whose Chow group of $0$-cycles $\CH_0(S)$ is representable, but which admits no universal $0$-cycle?
\end{ques}

The main result of the paper answers Question \ref{ques:2-dim_example} in the affirmative.

\begin{theorem}\label{thm:main_result}
Let $E$ be an elliptic curve over $\bbC$. Suppose that either
\begin{enumerate}[label=\textup{(\roman*)}]
    \item\label{it:trivial_end_C} $\End(E)\cong \Z$, or
    \item\label{it:CM_E_C} $E$ has complex multiplication by the maximal order in one of the Heegner fields $\Q(\sqrt{-c})$, where $c\in\{1,2,3,11,19,43,67,163\}$.
\end{enumerate}
Then there exists a smooth projective surface $S$ over $\bbC$ with $\Alb(S)\cong E$ such that $\Ch(S)$ is representable and $S$ does not admit a universal $0$-cycle.
\end{theorem}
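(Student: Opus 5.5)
Our plan is to take $S$ to be a bielliptic surface of type $2$ whose Albanese curve is $E$, i.e.\ $S=(A\times B)/G$ with $A,B$ elliptic curves, $G\cong \Z/2\times\Z/2$ acting on $A$ by translations and on $B$ by $b\mapsto -b$ together with a translation by a $2$-torsion point. Then $q(S)=1$, $p_g(S)=0$, and $\Alb(S)=A/G$, which we arrange to be isomorphic to $E$ by choosing $A$ as a suitable $4$-fold (or $2\times 2$) cover of $E$. Since $p_g=0$ and $S$ has Kodaira dimension $0$, Bloch's conjecture is known for $S$ (it is dominated by the abelian surface $A\times B$, or by Bloch--Kas--Lieberman), so $\Ch(S)_{\hom}\to \Alb(S)$ is an isomorphism. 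This gives representability (\S\ref{subsec:var_repr_Ch}).

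For the nonexistence of a universal $0$-cycle, we would use the obstruction promised in the abstract, in the following cohomological form. A universal $0$-cycle $\Gamma\in \CH^2(E\times S)$ has a class $[\Gamma]\in H^4(E\times S,\Z)$ whose Künneth component in $H^1(E,\Z)\otimes H^3(S,\Z)$ must induce the identity $H_1(E,\Z)\to H_1(S,\Z)_{\tf}\cong H_1(E,\Z)$, that is, an isomorphism onto the torsion-free part of $H^3(S,\Z)$. The key point is that for a type-$2$ bielliptic surface, $H^3(S,\Z)\cong H_1(S,\Z)$ has nontrivial torsion, and the integral class realizing the identity on $\Alb$ is forced to interact with this torsion. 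So the plan is to show that the integral Hodge class $\theta\in H^4(E\times S,\Z)$ corresponding to the identity (after fixing the torsion ambiguity) is not algebraic. We would test this by restricting or pulling back along correspondences to reduce to a statement about classes on $E\times S$ restricted to fibres of the elliptic fibration $S\to B/G\cong\bbP^1$, or about the transpose action $\Gamma^\ast\colon H^1(S,\Z)\to H^1(E,\Z)$ combined with a torsion linking pairing, extracting a mod-$2$ invariant that vanishes for all algebraic classes but not for $\theta$.

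The hypotheses on $E$ enter in computing which classes on $E\times S$ are algebraic: $H^4(E\times S)$ Hodge classes come from $\NS(E\times \tilde E)$-type data where $\tilde E$ is the isogenous curve $A$, so the group $\Hom(A,E)$ (or $\End(E)$) controls the algebraic classes. When $\End(E)=\Z$, or when $\End(E)$ is a maximal order of class number one (precisely the listed Heegner fields), every isogeny $E\to E'$ of the relevant degree-$2$ type is explicit and the lattice of algebraic classes can be computed exactly; in particular every such isogeny factors in a controlled way through multiplication by elements of $\End(E)$, and the class-number-one condition ensures ideals of $\End(E)$ are principal so no exotic isogenies occur. We would then verify that the algebraic classes have even mod-$2$ invariant, while $\theta$ has odd invariant.

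The main obstacle is the last step: producing a computable invariant (likely via a degeneration or via the $\Z/2$-torsion in $H^3(S,\Z)$ and a cup/linking pairing) that detects non-algebraicity of $\theta$, and checking it vanishes on all of $\NS$-generated classes of $E\times S$. I would first try to compute on the étale cover $E\times A\times B\to E\times S$ with $G$-invariants, where algebraic classes are generated by divisor products and graphs of homomorphisms, and show that $G$-invariant algebraic classes descend with an even obstruction.
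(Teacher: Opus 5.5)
\textbf{There is a genuine gap.} Your representability step is fine. The actual content of the theorem, however, is the non-existence of a universal $0$-cycle, and your proposal leaves it open, as you acknowledge. The routes you sketch cannot close it.

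\emph{The étale cover.} Passing to $E\times A\times B$ loses exactly the information needed. The pullback of your class $\theta$ (the paper's $\delta_S$) corresponds to $\hat p_\ast\colon H_1(E,\Z)\to H_1(A,\Z)=H_1(A\times B,\Z)^G$, where $\hat p$ is dual to $p\colon A\to A/G\cong E$. So it is the $H^1(E)\otimes H^3(A\times B)$-component of $[\Gamma_{\hat p}\times\{\mathrm{pt}\}]$, and hence algebraic upstairs. Conversely, pushforwards of algebraic classes from the cover only produce endomorphisms $p\circ h\in 2\End(E)$. Since $\pi_\ast\pi^\ast=4$, the cover is blind to the $2$-primary question. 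Your step ``$G$-invariant classes descend with an even obstruction'' restates the problem rather than solving it.

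\emph{Torsion.} Torsion in $H^3(S,\Z)$ is not the mechanism either. The class $\theta$ lives in the torsion-free quotient. Moreover, type $1$ bielliptic surfaces have more torsion ($\NS(S)_{\tor}\cong(\Z/2)^2$), yet always admit a universal $0$-cycle, because their Albanese fibration has index $1$.

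\emph{Role of the hypotheses.} Your account of how (i) and (ii) enter is off. $\Q(\sqrt{-7})$ also has class number one, yet Example \ref{ex:bielliptic_uni_0_cycle} shows the following: for such $E$ and suitable $a,b$, type-$2$ surfaces with an \emph{arbitrary} fibre curve admit a universal $0$-cycle. It is built from the reduced multiple fibres $E/a$, $E/b$ of $\psi_S$, which realize $\omega$ and $\bar\omega=1-\omega$. So any invariant must detect the arithmetic of $\End(E)$ at $2$, not its class group.

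\emph{Genericity.} You also need a genericity assumption on $B$, which your plan never imposes; the paper only treats a very general $S$.

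\textbf{What is needed, and how the paper gets it.} One needs an upper bound: cycles on $E\times S$, for very general $S$, should only induce endomorphisms of the form $\hat q_a f_a+\hat q_b f_b$, i.e.\ the contributions of those multiple fibres. One then checks that $1$ is not of this form. Under (i) or (ii) this holds: $g_a=\hat q_a f_a$ and $g_b=\hat q_b f_b$ have even degree, so they lie in the unique prime of $\End(E)$ above $2$ and cannot sum to $1$.

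The paper obtains this bound not by computing algebraic classes on $E\times S$, but by degenerating $S$ itself.
\begin{itemize}
\item Theorem \ref{thm:degeneration_S} starts from $E\times\mathcal F$, with $\mathcal F$ of Kodaira type $\mathbf{I}_4$, and two commuting free involutions. It produces a strictly semistable family whose geometric generic fibre is a type-$2$ bielliptic surface. Its special fibre is a chain $R_1\cup R_2$ of elliptic ruled surfaces over non-isomorphic étale double covers of $E$.
\item Theorem \ref{thm:obstruction} specializes a putative universal cycle to the components, after a ramified base change and semistable modification. It uses $[S_0]=[R_1]+[R_2]$, the projection formula, purity, and injectivity of $\End(E)\to\End(T_\ell E)$. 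The conclusion is that $\Alb(R_1)\times\Alb(R_2)\to E$ would split.
\item Corollary \ref{cor:obstruction} transports this from $K$ to $\bbC$ using Lecomte's rigidity and the PID property of $\End(E)$. This is where class number one actually enters.
\end{itemize}
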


Recall that ruled surfaces have representable Chow groups of $0$-cycles and do admit a universal $0$-cycle; this follows directly from the corresponding result for curves.\par 
More generally, let $X$ be a non-uniruled smooth projective surface with geometric genus $p_{g}(X)=0$ and irregularity $q(X)\neq0$. By work of Beauville, the structure of such surfaces is well understood; see \cite[Chapter VI]{beauville_surfaces}. Specifically, $X$ is birational to a quotient \begin{equation*}\label{eq:structure_X}\bigl(C\times D\bigr)/\Gi,\end{equation*}
where $C$ and $D$ are smooth projective curves and $\Gi$ is a finite group acting faithfully on both curves, without fixed points on $C \times D$. Moreover, at least one of the curves is elliptic. Up to exchanging $C$ and $D$, one has $$C/\Gi\cong E,\qquad D/\Gi\cong\bbP^{1}_{\bbC},$$ where $E$ is an elliptic curve. In this situation, the induced fibration \begin{equation}\label{eq:fibration_X}\phi_{X}\colon X\longrightarrow E\end{equation} coincides with the Albanese morphism of $X$.\par
It was shown in \cite{voisin_a} that if the index of the fibration $\phi_X$ in \eqref{eq:fibration_X}, defined as the greatest common divisor of the degrees $\deg(C/E)$, where $C \subset X$ ranges over irreducible curves dominating $E$, is equal to one, then $X$ admits a universal $0$-cycle. In particular, this condition is satisfied when the group $\Gi$ is cyclic; see \cite[Proposition 4.4]{voisin_a}.\par 
Our example in Theorem \ref{thm:main_result} arises from a very general bielliptic surface $S$ of type 2 (see \cite{bielliptic, bielliptic_suwa}) whose Albanese variety $\Alb(S)$ is isomorphic to the prescribed elliptic curve $E$. In this situation, the Albanese fibration $\phi_{S}\colon S\to \Alb(S)\cong E$ has index 2; see Table \ref{tab:bielliptic_coh}. As observed in \cite[$\S 2.2$]{voisin_a}, this condition alone does not suffice to imply that $S$ admits no universal $0$-cycle; see also Example \ref{ex:bielliptic_uni_0_cycle}. The proof of Theorem \ref{thm:main_result} therefore rests on the following geometric result.

\begin{theorem}\label{thm:intro_degeneration_S} Let $k$ be an algebraically closed field of characteristic $\neq2,$ and let $E$ be an elliptic curve over $k.$ Suppose that either \begin{enumerate}[label=\textup{(\roman*)}]\item\label{it:trivial_end} $\End(E)\cong\Z,$ or \item\label{it:CM_E} $E$ has complex multiplication by the maximal order in one of the Heegner fields $\Q(\sqrt{-c}),$ where $c\in\{1,2,3,11,19,43,67,163\}.$\end{enumerate} Set $\Delta:=\Spec k[[t]],$ and fix an algebraic closure $K$ of its function field. Then there exists a regular, flat, projective scheme $\mathcal{S}\to\Delta,$ together with a morphism $$\phi_{\mathcal{S}}\colon\mathcal{S}\longrightarrow E\times\Delta$$ such that the following properties hold:
\begin{enumerate}
    \item\label{it:generic_fibre} The geometric generic fibre $S_{K}$ is a bielliptic surface of type $2$, and the induced morphism $\phi_{S_{K}}\colon S_{K}\to E\times_{k} K$ coincides with the Albanese fibration of $S_{K}$.
    \item\label{it:special_fibre} The special fibre $S_{0}=\sum_{i=1}^{n}R_{i}$ is a reduced simple normal crossings divisor on $\mathcal{S}$ whose dual graph is a chain; that is, each intersection $R_{i}\cap R_{i+1}$ is a smooth irreducible curve, and $R_{i}\cap R_{j}=\varnothing$ whenever $j\notin\{i-1,i,i+1\}$.
     \item\label{it:albanese_S_0} For each $i,$ let $\al_{i}:\Alb(R_{i})\to E$ denote the homomorphism of Albanese varieties induced by the restriction of $\phi_{\mathcal{S}}$ to $R_{i}$. Then the sum morphism $$\sum_{i=1}^{n}\al_{i}\colon\bigoplus_{i=1}^{n}\Alb(R_{i})\longrightarrow E$$ does not admit a section. 
\end{enumerate}  
\end{theorem}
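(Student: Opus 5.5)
We construct $\mathcal{S}$ as a degeneration of the presentation $S=(E'\times F)/G$ of a bielliptic surface of type $2$, with $G\cong \Z/2\times\Z/2$, $E'\to E=E'/G$ an isogeny of degree $4$ given by translations, and $F$ an elliptic curve with $F/G\cong\bbP^1$, where $G$ acts on $F$ by $x\mapsto -x$ and by translation by a $2$-torsion point. The curve $E'$ and the base $E$ stay constant. Only $F$ degenerates, over $\Delta$, to a Néron $N$-gon: a cycle of $\bbP^1$'s, after base change and blowing up to make the total space regular. We then take the quotient of $E'\times\mathcal{F}$ by $G$.

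The steps are as follows.

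\begin{enumerate}
\item Choose a Tate-type family $\mathcal{F}\to\Delta$ whose generic fibre is an elliptic curve with a rational $2$-torsion point. The special fibre should be a reduced cycle of $2m$ rational curves, and the $G$-action should extend to $\mathcal{F}$:
\begin{itemize}
\item the inversion $-1$ reflects the polygon, fixing two opposite components;
\item translation by $2$-torsion rotates the polygon by half a turn.
\end{itemize}
Since $\mathrm{char}\,k\neq 2$, the group $G$ is tame. It acts freely on $E'\times\mathcal{F}$ because it acts freely on $E'$. Hence $\mathcal{S}:=(E'\times\mathcal{F})/G$ is regular, flat and projective, and it maps to $E'/G\times\Delta=E\times\Delta$. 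Property (1) is immediate from the generic fibre.
\item Compute the special fibre. The $G$-orbits of components $E'\times\bbP^1_j$ of the polygon correspond to orbits of $G$ acting on the $2m$-gon through the dihedral group. The quotient polygon is a chain whose two end components are images of the reflection-fixed components. The resulting components $R_i$ are of two kinds:
\begin{itemize}
\item at the ends, quotients $(E'\times\bbP^1)/H$ by stabilisers $H\cong\Z/2$;
\item in the middle, $E''\times\bbP^1$-type ruled surfaces with $E''=E'/\mathrm{Stab}$.
\end{itemize}
The double curves are images of $E'\times\{\text{node}\}$, which are smooth elliptic curves. One checks that the fibre is reduced and snc, with no triple points, so the dual graph is a chain. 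This gives property (2).
\item Identify the Albanese maps. Each $R_i$ is ruled over an elliptic curve $E'/H_i$, where $H_i\subset G$ is the stabiliser of the corresponding component (or of the pair of swapped components). The map $\al_i$ is therefore the isogeny $E'/H_i\to E'/G=E$, which has degree $[G:H_i]=2$. Arranging the combinatorics so that every nontrivial stabiliser occurs, the sum map becomes
\[\bigoplus_i E'/H_i\longrightarrow E,\]
where the $H_i$ range over order-two subgroups of $G$. Property (3) then says that no homomorphism $s\colon E\to\bigoplus_i E'/H_i$ satisfies $\sum_i \al_i\circ s_i=\Id_E$.
\end{enumerate}

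The main obstacle is property (3), and it is where the hypotheses on $\End(E)$ enter. Each composite $\al_i\circ s_i$ lies in $\al_i\circ\Hom(E,E'/H_i)$. Using the dual isogeny $\hat\al_i$, every element of $\Hom(E,E'/H_i)$ has the form $\beta\circ\hat\al_i/\!\deg$-type. More precisely, the image of $\Hom(E,E'/H_i)$ under $\al_i\circ(-)$ is a left ideal $I_i\subset\End(E)$ containing $2$, namely the ideal of endomorphisms factoring through the degree-$2$ isogeny $\al_i$.

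\begin{itemize}
\item In case (i), $\End(E)=\Z$. An endomorphism $n$ factors through the cyclic degree-$2$ isogeny $\al_i$ only if $\ker(\al_i^\vee)\subset\ker n$, which forces $n$ to be even. Hence each $I_i=2\Z$, their sum misses $1$, and there is no section.
\item In case (ii), the order $\End(E)$ is a PID, since these fields have class number one. Each $I_i$ is then principal, equal to $\Z$-linear combinations of $\End(E)\cdot\gamma_i$ with $\deg\gamma_i=2$, and we need $\sum_i I_i\neq\End(E)$. The key step is to show that all the $I_i$ lie in a single prime ideal above $2$. For this we choose $E'\to E$, that is, the subgroup $E[2]\supset\ker$ data, so that the kernels of the $\al_i^\vee$ are compatible with one fixed prime $\mathfrak{p}\mid 2$. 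When $2$ ramifies or splits, such a $\mathfrak p$ exists. When $2$ is inert ($c=3,11,19,43,67,163$), the ideal $2$ is itself prime but no degree-$2$ endomorphism exists, so $I_i$ is simply $2\End(E)$.
\end{itemize}

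I expect the careful bookkeeping here, matching kernels of the $\al_i$ with ideals of $\End(E)$, to be the delicate point.
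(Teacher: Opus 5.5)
Your construction and your proof of (3) follow the paper. Since a free translation action of $(\Z/2)^2$ on $E'$ is translation by $E'[2]$, and $E'/E'[2]\cong E'$, your $E'$ is just $E$. Only $F$ degenerates, to a Néron polygon on which $\iota$ acts as a reflection and translation by a $2$-torsion section as a half-turn, and $\mathcal S$ is the free quotient of $E\times\mathcal F$ by $G$. Property (3) then holds because each $\al_i\circ s_i$ factors through a degree-$2$ isogeny, so it has even degree. It is therefore an even integer in case (i), and lies in the unique prime of $\mathcal O_L$ over $2$ in case (ii).

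\textbf{The gap: Step 2 is false unless $m$ is even.} Write $\sigma_1=\tau_a\times\iota$ and $\sigma_2=\tau_b\times(\text{half-turn translation})$.
\begin{itemize}
\item The components $C_0$ and $C_m$ fixed by $\iota$ are swapped by $\sigma_2$, so together they give only one end of the chain.
\item The other end must come from a component fixed by the second reflection $\sigma_1\sigma_2\colon C_i\mapsto C_{m-i}$. Such components, namely $C_{m/2}$ and $C_{3m/2}$, exist only when $m$ is even.
\item For $m$ odd, $\sigma_1\sigma_2$ instead fixes the node $C_{(m-1)/2}\cap C_{(m+1)/2}$, swaps the two components through it, and acts on $E\times\{\text{node}\}$ by translation by $a+b$. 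So the image of $E\times C_{(m-1)/2}$ is glued to itself along an elliptic curve.
\item Hence for $m$ odd, $S_0$ is normal crossings but not simple normal crossings; this is the cycle-type degeneration, not a chain.
\end{itemize}
Take $m=2$ (type $\mathbf{I}_4$), as the paper does. Then the orbits $\{C_0,C_2\}$ and $\{C_1,C_3\}$ have stabilisers $\langle\sigma_1\rangle$ and $\langle\sigma_1\sigma_2\rangle$. So $S_0=R_1\cup R_2$, with $R_1$ ruled over $E/\langle a\rangle$ and $R_2$ ruled over $E/\langle a+b\rangle$.

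\textbf{Smaller corrections.}
\begin{itemize}
\item $\langle\sigma_2\rangle$ stabilises no component, so you cannot arrange for every nontrivial stabiliser to occur. You would not want to anyway, since extra stabilisers only enlarge $\sum_i I_i$.
\item For even $m\ge 4$, the middle components have trivial stabiliser, so $\al_i$ has degree $4$, not $2$. This is harmless, because then $I_i=2\End(E)$.
\item In case (ii) no choice of kernels is needed. An element of even norm lies in some prime over $2$, and for the listed $c$ there is only one such prime.
\end{itemize}
Your choice of kernels only matters when $2$ splits, i.e.\ $c=7$, and there it does work. Suppose $\{a,a+b\}$ contains the $2$-torsion point that is not the kernel of any degree-$2$ endomorphism. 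Then one of the two ideals is $2\mathcal O_L$, and their sum is a prime of norm $2$. So your variant would also cover $c=7$, which the statement excludes.
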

Either condition \ref{it:trivial_end} or \ref{it:CM_E} above implies that the sum morphism in \eqref{it:albanese_S_0} admits no section. As detailed in $\S$\ref{sec:obstruction}, this constitutes the key obstruction to the existence of a universal $0$-cycle on the geometric generic fibre of $\mathcal{S}\to\Delta$; see Theorem \ref{thm:obstruction} and Corollary \ref{cor:obstruction}.\par
We conclude the introduction with the following application of Theorem \ref{thm:main_result} to the integral Hodge conjecture.
\begin{corollary}\label{cor:integral_Hodge_S} Let $E$ be an elliptic curve over $\bbC$ such that either \ref{it:trivial_end_C} or \ref{it:CM_E_C} holds for $\End(E)$. Then there exists a bielliptic surface $S$ of type 2 whose Albanese variety $\Alb(S)$ is isomorphic to $E,$ and for which the integral Hodge conjecture for $1$-cycles on $E\times S$ fails. In particular, there exists a non-torsion integral Hodge class in $H^{4}(E\times S,\Z)$ that is not algebraic.
\end{corollary}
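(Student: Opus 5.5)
The plan is to deduce Corollary \ref{cor:integral_Hodge_S} from Theorem \ref{thm:main_result} through the standard dictionary between universal $0$-cycles and integral Hodge classes on $\Alb(X)\times X$, as in Voisin's work \cite{Voisin2025,voisin_a}. Take $S$ to be the bielliptic surface of type $2$ given by Theorem \ref{thm:main_result}. Concretely, it is the very general member $S_K$ of the degeneration in Theorem \ref{thm:intro_degeneration_S}, transported to $\bbC$ via an abstract isomorphism $K\cong\bbC$. Then $\Alb(S)\cong E$, and $S$ admits no universal $0$-cycle.

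First I construct the candidate Hodge class. The Albanese morphism induces an isomorphism $H_1(S,\Z)_{\tf}\cong H_1(E,\Z)$. The identity of $H_1(E,\Z)$ then gives a class
\[
\theta\in \Hom\bigl(H_1(E,\Z),H_1(S,\Z)_{\tf}\bigr)\subset H^1(E,\Z)\otimes H^3(S,\Z)_{\tf}\subset H^4(E\times S,\Z)/\mathrm{tors}.
\]
Because the map is a morphism of Hodge structures, $\theta$ is of type $(2,2)$. Lifting it to $H^4(E\times S,\Z)$ gives an integral Hodge class, defined up to torsion. It is non-torsion because it induces the identity on $H_1$.

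Next I check that $\theta$ is algebraic only if $S$ has a universal $0$-cycle. Suppose some lift, possibly corrected by a torsion class, equals $[\Gamma]$ for a $1$-cycle $\Gamma$ on $E\times S$. Then $\Gamma_\ast\colon H_1(E,\Z)\to H_1(S,\Z)_{\tf}$ is the given isomorphism. Composing with $\al_S$, the morphism $\psi_{(E,\Gamma)}$ is a homomorphism $E\to\Alb(S)=E$ up to translation, and its action on $H_1$ is the identity. Hence $\psi_{(E,\Gamma)}=\Id$, so $\Gamma$ is a universal $0$-cycle, which contradicts Theorem \ref{thm:main_result}.

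Two points require care. First, any correction by torsion classes must not change the induced map on $H_1$ modulo torsion. This holds because torsion classes in $H^4$ act trivially on $H_1$ modulo torsion. Second, I must justify that $\psi_{(E,\Gamma)}$ is computed by $\Gamma_\ast$ on $H_1$. This is the compatibility of the Abel--Jacobi map with correspondences, i.e.\ the standard identification $\Alb(S)=H_1(S,\Z)_{\tf}\otimes\mathbb{R}/H_1(S,\Z)_{\tf}$ with its complex structure; I expect this to be routine. The resulting class is a non-algebraic integral Hodge class in $H^4(E\times S,\Z)$, a non-torsion class of degree $4$ on a threefold. This yields the failure of the integral Hodge conjecture for $1$-cycles.
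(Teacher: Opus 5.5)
Your proposal is correct and follows the paper's route. The paper takes $S$ from Theorem \ref{thm:main_result} and invokes Lemma \ref{lem:int_Hodge_conj_vs_uni_0_cycl} (cited from Voisin without proof) to conclude that $\delta_S$ is a non-algebraic Hodge class; your construction of $\theta$, together with the argument that an algebraic representative would make $\psi_{(E,\Gamma)}$ act as the identity on $H_1$, is a proof of the needed direction of that lemma. One small imprecision: your ``concretely'' aside is not accurate as stated, since an arbitrary abstract isomorphism $K\cong\bbC$ need not carry $E_K$ to $E$ (the paper instead defines $E$ over a countable algebraically closed $k_0$ and embeds $k_0((t))$ into $\bbC$ over $k_0$), but your argument only uses Theorem \ref{thm:main_result} as stated, so this does not affect it.
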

The integral Hodge conjecture for threefolds is established in several significant cases. In particular, it holds for all threefolds of Kodaira dimension $\kappa=-\infty,$ as well as for threefolds $X$ with $\kappa=0$ and $h^0(X,K_{X})>0$; see \cite{Voisin2006,Totaro2021,Grabowski2004}.\par 
Corollary \ref{cor:integral_Hodge_S} provides an additional example among threefolds with torsion canonical bundle for which the integral Hodge conjecture fails for $1$-cycles. This should be viewed alongside the counterexamples constructed by Benoist and Ottem \cite{benoist-ottem}, where the obstruction stems from a non-algebraic torsion class in degree $4$ cohomology.\par
By contrast, our approach employs a different degeneration strategy: instead of degenerating the elliptic curve $E,$ we degenerate the surface $S.$ This perspective is motivated by recent work of the author \cite{Alexandrou, alexandrou2025torsionhigherchowcycles} and by work of Schreieder \cite{Schreieder2025}.
\par
The paper is organized as follows. In $\S \ref{sec:preliminaries}$ we review the necessary background material. Section \ref{sec:obstruction} introduces an obstruction to the existence of a universal $0$-cycle on a smooth projective variety $X$ with representable Chow group of $0$-cycles. In $\S \ref{sec:degen_bielliptic}$ we prove Theorem \ref{thm:intro_degeneration_S}. Finally, $\S \ref{sec:final}$ is devoted to the proofs of our main results, including Theorem \ref{thm:main_result} and Corollary \ref{cor:integral_Hodge_S}.

\section{Preliminaries}\label{sec:preliminaries}

\subsection{Notation}\label{subsec:notation} Let $n$ be a positive integer and $A$ an abelian group. We denote by
$$
A[n] := \{a \in A \mid na = 0\}
$$
the $n$-torsion subgroup of $A$. By a slight abuse of notation, for a homomorphism of abelian groups
$\varphi \colon A \to B$ we write
$$
B/A := \coker(\varphi).
$$
For convenience, we also set
$$
A/n := A \otimes_{\mathbb{Z}} \mathbb{Z}/n.
$$

Let $k$ be a field. An \emph{algebraic $k$-scheme} is a separated scheme of finite type over $k$, and a \emph{variety} over $k$ is an integral algebraic $k$-scheme. If $k \subset K$ is a field extension and $X$ is a $k$-scheme, we denote by
$$
X_K := X \times_{\Spec k} \Spec K
$$
the base change of $X$ to $K$.\par
Let $R$ be a discrete valuation ring with algebraically closed residue field $k$, and fix an algebraic
closure $K$ of its fraction field. For a flat $R$-scheme
$\mathcal{X} \to \Spec R,$ we denote by
$$
X_0 := \mathcal{X} \times_R k
\quad\text{and}\quad
X_K := \mathcal{X} \times_R K
$$
the special fibre and the geometric generic fibre, respectively. We say that
$\mathcal{X} \to \Spec R$ is \emph{strictly semistable} if $\mathcal{X}$ is regular, $X_K$ is smooth,
and $X_0$ is a reduced simple normal crossings divisor on $\mathcal{X}$; that is, the irreducible
components of $X_0$ are smooth, and the scheme-theoretic intersection of any $r$ distinct components
of $X_0$ is either empty or smooth and equidimensional of codimension $r$ in $\mathcal{X}$; see \cite[Definition 1.1]{Hartl2001SemistabilityAB}.\par
The \emph{Chow group} of codimension-$i$ cycles on a variety $X/k$ modulo rational equivalence is denoted by $\CH^{i}(X)$ (or $\CH_{\dim X-i}(X)$). The \emph{degree map} is the homomorphism $$\deg\colon \Ch(X)\longrightarrow\Z,\quad \sum_{j=1}^{n}m_{j}\,[x_{j}]\longmapsto \sum_{j=1}^{n}m_{j}\,[k(x_{j}):k],$$ where $k(x_j)$ denotes the residue field of the closed point $x_j\in X$. The subgroup $\Ch(X)_{\hom}$ of nullhomologous $0$-cycles is defined as the kernel of the degree map.

\subsection{Cohomology}\label{subsec:cohomology} Let $X$ be a scheme, let $\ell$ be a prime invertible on $X,$ and fix a positive integer $\nu$. Denote by $\mu_{\ell^{\nu}}$ the \'etale sheaf of $\ell^\nu$-th roots of unity on $X_{\et}.$ For $n\in\Z,$ define the Tate twists
$$\mu^{\otimes n}_{\ell^{\nu}}:=\begin{cases} \mu_{\ell^{\nu}}\otimes\cdots\otimes\mu_{\ell^{\nu}}\quad (n\text{ times}), & n\ge1,\\
\Z/\ell^{\nu}, & n=0,\\
\Hom\bigl(\mu^{\otimes -n}_{\ell^{\nu}}, \Z/\ell^{\nu}\bigr), & n<0.
\end{cases}$$
For a closed immersion $Z\hookrightarrow X,$ the \'etale cohomology with supports in $Z$ is defined by 
\begin{equation}\label{eq:coh_support}
    H^{i}_{Z}\bigl(X,\Z/\ell^{\nu}(n)\bigr):= H^{i}_{Z}\bigl(X_{\et},\mu^{\otimes n}_{\ell^{\nu}}\bigr).
\end{equation}
When $Z=X,$ we simply write 
\begin{equation}\label{eq:etale_cohomology}
    H^{i}_{\et}\bigl(X,\Z/\ell^{\nu}(n)\bigr):=H^{i}_{X}\bigl(X,\Z/\ell^{\nu}(n)\bigr).\end{equation} 

If $X$ is an algebraic scheme over an algebraically closed field $k$, we suppress the Tate twist $(n)$ from the notation. \par

The twisted cohomology theory \eqref{eq:coh_support} (for $X$ an algebraic $k$-scheme) forms a Poincar\'e duality theory with supports in the sense of Bloch--Ogus; see \cite[Definition (1.3), Example (2.1)]{Bloch-Ogus1974}. In particular, let $X$ and $Y$ be smooth varieties over a field $k,$ with $X$ proper of dimension $d=\dim X.$ Then any algebraic cycle $$[\Gamma]\in \CH^{c}(X\times Y)$$ induces a correspondence action 
 \begin{equation}\label{eq:action-correspondences}
     [\Gamma]_{\ast}\colon H^{i}_{\et}\bigl(X,\Z/\ell^{\nu}(n)\bigr)\longrightarrow H^{i+2c-2d}_{\et}\bigl(Y,\Z/\ell^{\nu}(n+c-d)\bigr)
 \end{equation}
 given explicitly by $$ \gamma\longmapsto q_{\ast}\bigl(p^{\ast}\gamma\ \cup\ \cl_{X\times Y}^c([\Gamma])\bigr),$$
where $$\cl^{c}_{X\times Y}\colon \CH^{c}(X\times Y)\longrightarrow H^{2c}_{\et}\bigl(X\times Y,\Z/\ell^{\nu}(c)\bigr)$$ is the cycle class map, and $$p\colon X\times_k Y\longrightarrow X,\qquad q\colon X\times_k Y\longrightarrow Y$$ are the natural projection morphisms.\par
 For a closed immersion $Z\hookrightarrow X,$ we define the $\ell$-adic \'etale cohomology with supports by
\begin{equation}\label{eq:coh_support_Z_l}
    H^{i}_{Z}\bigl(X,\Z_\ell(n)\bigr):= \varprojlim_{\nu}\ H^{i}_{Z}\bigl(X,\Z/\ell^{\nu}(n)\bigr).
\end{equation}
When $Z=X,$ we write
\begin{equation}\label{eq:etale_cohomology_Z_l}
     H^{i}_{\et}\bigl(X,\Z_\ell(n)\bigr):= H^{i}_{X}\bigl(X,\Z_\ell(n)\bigr).
\end{equation}
\par The $\ell$-adic \'etale cohomology groups defined in \eqref{eq:etale_cohomology_Z_l} have good formal properties only when the groups $H_{\et}^i(X,\Z/\ell^\nu(n))$ are finite. In this case, they agree with Jannsen's continuous $\ell$-adic \'etale cohomology \cite{Jannsen1988}. This finiteness condition holds, for example, when $X$ is a variety over an algebraically closed field $k$. By the proper base change theorem, it also holds when $X$ is proper over a strictly Henselian local ring $R$ (e.g.\ $R=k[[t]]$); see \cite[Corollary VI.2.7]{milne_etale}.

\subsection{Varieties with representable Chow group of 0-cycles}\label{subsec:var_repr_Ch}

There are several equivalent characterizations of varieties with representable Chow group of $0$-cycles; we refer to \cite[\S 10.1]{Voisin_2003} for a detailed account.

Let $X$ be a smooth projective variety over $\bbC$. For a positive integer $n$, denote by $X^{(n)}$ the $n$-fold symmetric product of $X$. Its closed points correspond to effective $0$-cycles of degree $n$ on $X$. There is a natural map
\begin{equation*}
    \rho_n \colon X^{(n)}(\bbC) \times X^{(n)}(\bbC) \longrightarrow \Ch(X)_{\hom},
    \quad ([Z_1],[Z_2]) \longmapsto [Z_1 - Z_2].
\end{equation*}
We say that $\Ch(X)$ is \emph{representable} if $\rho_n$ is surjective for all sufficiently large $n$. This condition is equivalent to the finite-dimensionality of $\Ch(X)$; see \cite[Proposition 10.10]{Voisin_2003}.\par

By a theorem of Roitman \cite[Theorem 4]{Roitman_1972}, $\Ch(X)$ is representable if and only if the Abel--Jacobi map for $0$-cycles
$$
\alpha_X \colon \Ch(X)_{\hom} \longrightarrow \Alb(X)
$$
is an isomorphism; see also \cite[Theorem 10.11]{Voisin_2003}. This is further equivalent to the condition that $\Ch(X)$ be supported on any smooth complete intersection curve $C \subset X$ cut out by ample hypersurfaces; see \cite[Proposition 10.12]{Voisin_2003}. In particular, the Bloch--Srinivas decomposition of the diagonal argument \cite{bloch-srinivas} implies that
$$
H^{0}(X,\Omega_{X}^{i}) = 0 \quad \text{for all } i \geq 2.
$$

We record the following structural consequence.

\begin{lemma}\label{lem:structure_varieties_rep_CH_0}
Let $X$ be a smooth projective variety over $\bbC$ satisfying
$$
H^{0}(X,\Omega_{X}^{i}) = 0 \ \text{for all } i \geq 2,
\qquad
H^{0}(X,\Omega_{X}^{1}) \neq 0.
$$
Then the Albanese morphism $\al_X \colon X \to \Alb(X)$ factors as
\begin{equation}\label{eq:factorization}
X \xrightarrow{\ \phi_X\ } C \xrightarrow{\ \psi\ } \Alb(X),
\end{equation}
where $\phi_X$ is a surjective morphism with connected fibres onto a smooth projective curve $C$, and $\psi$ is finite. Moreover, $\psi$ induces an isomorphism
$$
\jac(C) \cong \Alb(X).
$$
\end{lemma}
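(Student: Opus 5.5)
The strategy is the classical Castelnuovo--de Franchis argument: show that the image of the Albanese morphism is a curve, then Stein factorize. Set $q:=h^{0}(X,\Omega^{1}_{X})\geq1$ and $A:=\Alb(X)$, so $\dim A=q$. Recall that $\al_{X}^{\ast}\colon H^{0}(A,\Omega^{1}_{A})\to H^{0}(X,\Omega^{1}_{X})$ is an isomorphism, and that the image $Y:=\al_{X}(X)$ generates $A$ as a group.

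\textbf{Step 1: $\dim Y=1$.} Suppose instead that $\dim Y\geq2$. At a general smooth point $y$ of $Y$, the tangent space $T_{y}Y$ has dimension at least $2$. Translation-invariant $1$-forms on $A$ restrict surjectively onto $T_{y}^{\ast}Y$, so we can choose two invariant forms $\omega_{1},\omega_{2}$ on $A$ with $(\omega_{1}\wedge\omega_{2})|_{T_{y}Y}\neq0$. Pick $x\in X$ mapping to $y$ at which $\al_{X}$ is smooth onto $Y$; such points exist by generic smoothness in characteristic zero. Then $d\al_{X,x}$ surjects onto $T_{y}Y$, hence $\al_{X}^{\ast}(\omega_{1}\wedge\omega_{2})\neq0$ at $x$. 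This gives a nonzero element of $H^{0}(X,\Omega^{2}_{X})$, contradicting the hypothesis. So $\dim Y\leq1$. Since $q\geq1$ and $Y$ generates $A$, $Y$ is not a point, and therefore $Y$ is an irreducible curve.

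\textbf{Step 2: factorization.} Take the Stein factorization $X\xrightarrow{\phi'}C'\to Y$, where $\phi'$ has connected fibres and $C'$ is a normal, hence smooth, projective curve. Set $C:=C'$ and $\phi_{X}:=\phi'$. The map $\psi\colon C\to Y\subset A$ is finite because it is a non-constant morphism of curves, so we obtain \eqref{eq:factorization}.

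\textbf{Step 3: $\jac(C)\cong\Alb(X)$.} By the universal property of the Albanese of $C$, the map $\psi$ factors through a homomorphism $u\colon\jac(C)\to A$ (after fixing base points). Conversely, $\phi_{X}$ induces $v\colon A=\Alb(X)\to\jac(C)$ with $\al_{C}\circ\phi_{X}=v\circ\al_{X}$ up to translation. Then $u\circ v\circ\al_{X}=\psi\circ\phi_{X}=\al_{X}$ up to translation, and since $\al_{X}(X)$ generates $A$, we get $u\circ v=\Id_{A}$.

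For $v\circ u=\Id$ there are two options. One is to use $\phi_{X,\ast}\phi_{X}^{\ast}=\Id$ on $0$-cycles of degree zero of $C$ modulo $\deg$-scaling, which is problematic. The cleaner route is through pullback of $1$-forms. Since $\phi_{X}$ has connected fibres, $\phi_{X}^{\ast}\colon H^{1}(C,\Z)\to H^{1}(X,\Z)$ is injective with torsion-free cokernel, and $\phi_{X}^{\ast}\colon H^{0}(C,\Omega^{1}_{C})\to H^{0}(X,\Omega^{1}_{X})$ is injective. The latter is also surjective: every form on $X$ is pulled back from $A$, hence from $Y$, hence from $C$. So $g(C)=q$, which means $u$ and $v$ are homomorphisms between abelian varieties of the same dimension. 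Combined with $u\circ v=\Id_{A}$, this shows $v$ is injective, hence an isogeny with $u$ as a one-sided inverse, and therefore both are isomorphisms.

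The main subtlety is Step 3. In particular, one must check that the connectedness of the fibres is what ensures $u$ is an isomorphism rather than merely an isogeny; this is where the injectivity and torsion-free cokernel of $\phi_{X}^{\ast}$ on $H^{1}(-,\Z)$ enter.
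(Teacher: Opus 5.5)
Your proof is correct and follows the standard argument behind the result the paper cites; the paper gives no proof of its own, only the reference \cite[Lemma 1.5 and Remark 1.6]{voisin_a}. You show the Albanese image is a curve because otherwise a wedge of two invariant $1$-forms would pull back to a nonzero $2$-form, then Stein factorize and combine $u\circ v=\Id_{\Alb(X)}$ with $g(C)=q$. Your closing remark about where connectedness of the fibres enters does not match your own argument: the integral $H^{1}$ statement is never used, since $u\circ v=\Id$ together with $\dim\jac(C)=\dim\Alb(X)$ already forces $v$, and hence $u$, to be an isomorphism.
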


\begin{proof}
See \cite[Lemma 1.5 and Remark 1.6]{voisin_a}.
\end{proof}

\subsection{Bielliptic surfaces}\label{subsec:bielliptic} Throughout this subsection, the base field $k$ is assumed to be algebraically closed.\par 
Bielliptic surfaces constitute one of the classes of smooth projective surfaces of Kodaira dimension zero in the Enriques--Kodaira classification; see \cite{beauville_surfaces,BombieriMumford1977}. Every bielliptic surface $S$ admits a presentation \begin{equation*}\label{eq:presentation_S}
    S\cong\bigl(E\times_k F\bigr)/\Gi,\end{equation*}
where $E$ and $F$ are elliptic curves, and $\Gi$ is a finite group acting freely on $E\times_k F.$ The action is by translations on $E$ and by automorphisms on $F$. Moreover, one has an isomorphism $$F/\Gi\cong\bbP^{1}_k.$$\par
Bielliptic surfaces were classified by Bagnera--De Franchis into seven types; see \cite{bielliptic,bielliptic_suwa}. The classification is summarized in Table \ref{tab:bielliptic}.
\renewcommand{\arraystretch}{1.5} % Increase row height
\begin{table}[h]
\centering
\caption{Classification of Bielliptic Surfaces (Bagnera--de Franchis)}
\label{tab:bielliptic}
\begin{tabular}{c c l c}
\hline
\textbf{Type} & \textbf{Group $\Gi$} & \textbf{$\NS(S)_{\tor}$} & \textbf{Order of $K_S$ in $\Pic(S)$} \\
\hline
1 & $\mathbb{Z}/2$ & $\Z/2\times\Z/2$ & 2 \\
2 & $\mathbb{Z}/2 \times \mathbb{Z}/2$ & $\Z/2$ & 2 \\
3 & $\mathbb{Z}/4$ & $\Z/2$ & 4 \\
4 & $\mathbb{Z}/4\times \mathbb{Z}/2$ & $0$ & 4 \\
5 & $\mathbb{Z}/3$ & $\Z/3$ & 3 \\
6 & $\mathbb{Z}/3 \times \mathbb{Z}/3$ & $0$ & 3 \\
7 & $\mathbb{Z}/6$ & $0$ & 6 \\
\hline
\end{tabular}
\end{table}
\par The Albanese morphism of $S$ coincides with the projection $$\phi_{S}\colon S\longrightarrow E/\Gi,$$ whose fibres are elliptic curves isomorphic to $F$. The induced Abel--Jacobi map on $0$-cycles,
$$(\phi_{S})_{\ast}\colon\Ch(S)_{\hom}\longrightarrow E/\Gi$$
is an isomorphism; see \cite[Example (2)]{bloch-srinivas}. Equivalently, the Chow group of $0$-cycles on a bielliptic surface is representable.\par
There is also a natural elliptic fibration $$\psi_{S}\colon S\longrightarrow (F/\Gi)\cong\bbP^{1}_{k}.$$ 
This fibration admits multiple fibres corresponding to the branch points of the quotient morphism $F \to F/\Gi$, with multiplicities equal to the corresponding ramification indices.\par
Let $f$ and $e$ denote the classes in $\NS(S)$ of the smooth fibres of $\phi_S$ and $\psi_S$, respectively. A direct computation shows \begin{equation}\label{eq:intersection_e_f}e^2=0,\quad,f^2=0,\quad e\cdot f=|\Gi|.\end{equation}

The structure of the Néron--Severi group $\NS(S)$ is given in Table \ref{tab:bielliptic_coh}; see \cite[Table 2]{Serrano1990}.

\renewcommand{\arraystretch}{1.5}
\begin{table}[h]
\centering
\caption{Structure of $\NS(S)$}
\label{tab:bielliptic_coh}
\begin{tabular}{c c c}
\hline
\textbf{Type} & \textbf{$\NS(S)$} & \textbf{Index of $\phi_S$} \\
\hline
1 & $\Z[\tfrac{1}{2}e] \oplus \Z[f] \oplus \Z/2 \oplus \Z/2$ & $1$ \\
2 & $\Z[\tfrac{1}{2}e] \oplus \Z[\tfrac{1}{2}f] \oplus \Z/2$ & $2$ \\
3 & $\Z[\tfrac{1}{4}e] \oplus \Z[f] \oplus \Z/2$ & $1$ \\
4 & $\Z[\tfrac{1}{4}e] \oplus \Z[\tfrac{1}{2}f]$ & $2$ \\
5 & $\Z[\tfrac{1}{3}e] \oplus \Z[f] \oplus \Z/3$ & $1$ \\
6 & $\Z[\tfrac{1}{3}e] \oplus \Z[\tfrac{1}{3}f]$ & $3$ \\
7 & $\Z[\tfrac{1}{6}e] \oplus \Z[f]$ & $1$ \\
\hline
\end{tabular}
\end{table}

\par The last column records the index of $\phi_S$, defined as the greatest common divisor of the degrees
$$
\deg\bigl(C \longrightarrow E/\Gi\bigr),
$$
where $C \subset S$ ranges over irreducible curves dominating $E/\Gi$. This index is computed directly from \eqref{eq:intersection_e_f} together with the structure of $\NS(S)$.\par

By \cite[Proposition 2.7]{voisin_a}, if the index of $\phi_S$ equals $1$, then $S$ admits a universal $0$-cycle. When the index is greater than $1$, the existence of a universal $0$-cycle remains unclear; see \cite[\S 2.2]{voisin_a} and Example \ref{ex:bielliptic_uni_0_cycle}. From this perspective, the potentially interesting cases are types $2$, $4$, and $6$.

\section{An obstruction to the existence of a universal 0-cycle}\label{sec:obstruction}
The following result is the key input for the proof of Theorem \ref{thm:main_result}.

\begin{theorem}
\label{thm:obstruction}
Let $k$ be an algebraically closed field and let $C$ be a smooth projective curve over $k$. Set $\Delta := \Spec k[[t]]$, and fix an algebraic closure $K$ of its function field. 

Let $p \colon \mathcal{X} \to \Delta$ be a flat, projective morphism of relative dimension $d$ with regular total space, and let $\phi_{\mathcal{X}} \colon \mathcal{X} \to C \times_k \Delta$ be a surjective morphism.
Denote by $X_0$ the special fibre of $p$, by
$\phi_{X_0} \colon X_0 \to C$ the induced morphism, and by $X_{K
}$ the geometric generic fibre of $p$. Write $\phi_{X_K} \colon X_K \to C_K$
for the base change of $\phi_{\mathcal{X}}$ to $K$.\par
We further assume the following:
\begin{enumerate}
    \item \label{it:X_0} $X_{0}=\sum_{i=1}^{n}X_{0i}$ is a reduced simple normal crossings divisor on $\mathcal{X}$ whose dual graph is a chain. Precisely, each intersection $X_{0i}\cap X_{0i+1}$ is smooth and irreducible of dimension $d-1,$ and $X_{0i}\cap X_{0j}=\varnothing$ whenever $j\notin\{i-1,i,i+1\}$.
    \item \label{it:correspondence} 
    There exists a correspondence
$$
[\Gamma] \in \CH^{d}(C \times_k X_{K})
$$
inducing a splitting of the pushforward map
$$
(\phi_{X_K})_\ast\colon \Ch(X_{K})_{\hom}\longrightarrow\jac(C_{K}).
$$
    
\end{enumerate}
\par 
For each $i,$ let $$\al_{i}\colon\Alb(X_{0i})\longrightarrow \jac(C)$$ be the homomorphism of Albanese varieties induced by the restriction of $\phi_{X_0}$ to $X_{0i}$. Then the induced sum morphism $$\sum_{i=1}^{n}\al_{i}\colon\bigoplus_{i=1}^{n}\Alb(X_{0i})\longrightarrow\jac(C)$$
admits a section. 
\end{theorem}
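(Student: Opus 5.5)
The plan is to specialize the correspondence $[\Gamma]$ from the geometric generic fibre to the components of the special fibre. From its restrictions to the components I will read off homomorphisms $s_i\colon\jac(C)\to\Alb(X_{0i})$ with $\sum_i\al_i\circ s_i=\Id_{\jac(C)}$.

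\emph{Step 1: descent and base change.} The cycle $[\Gamma]$ is defined over a finite extension of $k((t))$. Since $k$ is algebraically closed, such an extension is $k((t^{1/m}))$ for some $m$. After the substitution $t\mapsto t^{m}$, the total space $\mathcal{X}\times_{k[[t]]}k[[t^{1/m}]]$ acquires singularities of type $A_{m-1}$ along the double loci $X_{0i}\cap X_{0i+1}$. Resolving them in the standard way gives a new regular model whose special fibre is again a reduced snc chain. It consists of the strict transforms of the $X_{0i}$ (isomorphic to the $X_{0i}$), together with $m-1$ inserted components between $X_{0i}$ and $X_{0i+1}$. Each inserted component is a $\bbP^1$-bundle over the curve $D_i:=X_{0i}\cap X_{0i+1}$.

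The Albanese of such a $\bbP^1$-bundle is $\Alb(D_i)$. Its map to $\jac(C)$ factors through $\Alb(D_i)\to\Alb(X_{0i})\xrightarrow{\al_i}\jac(C)$. Hence a section of the enlarged sum map yields a section of the original one, by composing with these maps. We may and will therefore assume that $[\Gamma]$ comes from a cycle on $C\times_k X_\eta$, where $X_\eta$ is the generic fibre over $k((t))$.

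\emph{Step 2: restriction to components.} Let $\overline{\Gamma}\subset C\times_k\mathcal{X}$ be the closure of $\Gamma$. Since $\mathcal{X}$ is regular, each $C\times X_{0i}$ is a Cartier divisor in the regular scheme $C\times_k\mathcal{X}$, so we have the Gysin pullback
$$\Gamma_i:=\iota_i^{!}[\overline{\Gamma}]\in\CH^{d}(C\times_k X_{0i}).$$
Define
$$s_i\colon\jac(C)\longrightarrow\Alb(X_{0i}),\qquad c-c_0\longmapsto \alpha_{X_{0i}}\bigl(\Gamma_{i\ast}(c-c_0)\bigr).$$
By regularity of the Abel--Jacobi map, each $s_i$ is a homomorphism of abelian varieties. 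Because the divisor $X_0=\sum_i X_{0i}$ is reduced, the sum of the pushforwards of the $\Gamma_{i\ast}(c-c_0)$ to $X_0$ equals the Fulton specialization $\sigma\bigl(\Gamma_\ast(c_K-c_{0,K})\bigr)\in\CH_0(X_0)$. To see this, use that the points $c,c_0$ extend to constant sections of $C\times\Delta$, and that specialization is compatible with correspondences.

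\emph{Step 3: the composite is the identity.} Write $z_K:=\Gamma_\ast(c_K-c_{0,K})$. Hypothesis \eqref{it:correspondence} says that $(\phi_{X_K})_\ast z_K$ is rationally equivalent to $c_K-c_{0,K}$ on $C_K$. Specialization commutes with proper pushforward, so
$$\sum_i(\phi_{X_0}|_{X_{0i}})_\ast\Gamma_{i\ast}(c-c_0)=\sigma\bigl((\phi_{X_K})_\ast z_K\bigr).$$
For the constant family $C\times\Delta$, specialization $\Pic^0(C_K)\to\Pic^0(C)$ is inverse to base change, so the right-hand side is $c-c_0$ in $\jac(C)$. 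Since $\al_i\circ\alpha_{X_{0i}}=\alpha_C\circ(\phi_{X_0}|_{X_{0i}})_\ast$ by functoriality of Albanese, we get $\sum_i\al_i(s_i(c-c_0))=c-c_0$. These classes generate $\jac(C)$, so $\sum_i\al_i\circ s_i=\Id$, and $(s_i)_i$ is the desired section.

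The main obstacle is the interplay between the base change in Step 1 and the compatibility of specialization with correspondences in Step 2. One must check carefully that the restrictions $\Gamma_i$ to the components assemble exactly to the specialized cycle, using that $X_0$ is reduced snc and $\mathcal{X}$ regular. One must also check that the inserted ruled components do not destroy the conclusion; I would handle this through the factorization of their Albanese maps through $\Alb(D_i)$.
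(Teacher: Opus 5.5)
Your proof is correct. It shares the paper's skeleton: the same ramified base change with resolution of the $A_{e-1}$ singularities along the double loci, the same factorization of the Albanese maps of the inserted $\bbP^1$-bundles, and the same restricted correspondences $\Gamma_i$ (your $s_i$ are the paper's $\beta_i$). It proves the key identity $\sum_i \al_i\circ s_i=\Id$ by a different mechanism.

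The paper works in $\ell$-adic cohomology. It shows that $(\phi_{\mathcal X})_*\circ[\Gamma_{\mathcal X}]_*$ is the identity on $H^1_{\et}(C\times\Delta,\Z_\ell)$ by proper base change to $X_K$. It then restricts to $C$ using $[X_0]=\sum_i[X_{0i}]$, the projection formula and Gabber's absolute purity. It recognizes the result as $T_\ell$ of the composite, and concludes by injectivity of $\End(\jac(C))\to\End_{\Z_\ell}(T_\ell\jac(C))$.

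You instead use Fulton's specialization map $\sigma$ on Chow groups, together with $\operatorname{div}(t)=\sum_i X_{0i}$ and the compatibility of $\sigma$ with Gysin maps and proper pushforward. This gives the identity directly on $k$-points of $\jac(C)$, with no \'etale cohomology of the total space, no purity theorem and no Tate-module argument. It is the Chow-level version of the paper's diagram, and more elementary.

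Your route also gives more than you use. Fulton's $\sigma$ exists for any scheme of finite type over a discrete valuation ring. So you could specialize $\Gamma_L$ on the non-regular base change $C\times_k(\mathcal X\times_{k[[t]]}R')$, whose special fibre is still $C\times X_0$. You could then lift $\sigma(\Gamma_L)$ arbitrarily along the surjection $\bigoplus_i\CH_1(C\times X_{0i})\to\CH_1(C\times X_0)$. Since refined Gysin maps commute with proper pushforward, the same computation goes through without Step 1. The cohomological route, by contrast, genuinely needs a regular model: it needs $[\Gamma_{\mathcal X}]_*$ on $H^{2d-1}_{\et}(\mathcal X,\Z_\ell)$ and classes supported on the $X_{0i}$.

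Three small corrections.
\begin{enumerate}
\item In positive characteristic a finite extension of $k((t))$ need not be $k((t^{1/m}))$; Artin--Schreier extensions are a counterexample. Only the ramification index $e$ matters, since locally the equation becomes $xy=u\mu^{e}$ with $u$ a unit, so the $A_{e-1}$ picture is unchanged.
\item The $D_i$ have dimension $d-1$, so they are not curves in general.
\item The map $\sigma$ is defined on the generic fibre over the fraction field of the new base, not over $K$. In Step 3 you therefore need $(\phi_{X_\eta})_*z_\eta=c-c_0$ already in $\CH_0(C_\eta)$. This holds because $\Pic(C_\eta)\to\Pic(C_K)$ is injective, as $C$ has rational points. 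Enlarging the field separately for each $c$ would not suffice, since $c$ ranges over infinitely many points.
\end{enumerate}
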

\begin{proof}
By assumption \eqref{it:correspondence}, there exists a correspondence $$[\Gamma]\in\CH^{d}(C\times_{k}X_{K})$$
such that the induced endomorphism $$(\phi_{X_{K}})_{\ast}\circ [\Gamma]_{\ast}\in\End\bigl(\jac(C_{K})\bigr)$$
is the identity.\par
We begin with the following reduction step.
\begin{step}\label{step:reduction} 
After performing a finite ramified base change $\Delta' \to \Delta$ and replacing $\ca X$ by a suitable modification of the fiber product $\ca X \times_{\Delta} \Delta',$
we may assume that the correspondence $[\Gamma]$ lifts to a class in
$$
\CH^{d}(C\times_{k}\mathcal{X}).
$$
\begin{proof}[Proof of Step \ref{step:reduction}] Let $\mathcal{K}$ be the directed system of intermediate fields $$k((t))\subset E\subset K $$ consisting of finite field extensions of $k((t)),$ ordered by inclusion. Since Chow groups commute with filtered colimits, we have $$\CH^{d}(C\times_{k}X_{K})=\varinjlim_{E\in\mathcal{K}}\CH^{d}(C\times_{k}X_{E}).$$ Hence there exists a finite field extension $L/k((t))$ such that $[\Gamma]$ is the image of a class $$[\Gamma_{L}]\in\CH^{d}(C\times_{k}X_{L}).$$ \par

Let $R\subset L$ be the integral closure of $k[[t]]$ in $L$. Then $R$ is a complete discrete valuation ring with residue field $k$ and fraction field $L.$ In particular, there exists an isomorphism $R\cong k[[\mu]]$ under which the inclusion $k[[t]] \hookrightarrow R$ is given by $$t \longmapsto \mu^m,$$ for some $m\geq1.$\par
Set $\Delta':=\Spec k[[\mu]],$ and let $p'\colon\mathcal{X}'\to\Delta'$ be the base change of $p\colon\mathcal{X}\to\Delta$ along the induced morphism $\Delta'\to\Delta$.\par
In general, $\mathcal{X}'$ need not be regular. As in Hartl's construction (cf. the proof of \cite[Proposition 2.2]{Hartl2001SemistabilityAB}), by successively blowing up non-Cartier divisors in the special fibre one obtains a strictly semistable model $$\widetilde{p}\colon\widetilde{\mathcal{X}}\longrightarrow\Delta'.$$ Since $$X_{0}=\bigcup_{i=1}^{n}X_{0i}$$ is a chain, the special fibre of $\widetilde{p},$ \begin{equation*}\widetilde{X}_{0}=X_{01}\cup \bigl(\bigcup_{j=1}^{m-1}R_{1,j}\bigr)\cup X_{02}\cup \bigl(\bigcup_{j=1}^{m-1}R_{2,j}\bigr)\cup \dots\cup X_{0n}\end{equation*} is again a chain. The newly introduced components $R_{i,j}$ are $\bbP^{1}_{k}$-bundles over $$Y_{i,i+1}:=X_{0i}\cap X_{0i+1},$$ with projection morphisms $\pi_{i,j}\colon R_{i,j}\to Y_{i,i+1}$. Moreover, the intersections $$X_{0i}\cap R_{i,1}, \quad R_{i,m-1}\cap X_{0i+1}, \quad R_{i,j}\cap R_{i,j+1}$$ are all canonically isomorphic to $Y_{i,i+1},$ and hence define sections of the corresponding $\bbP^{1}_{k}$-bundles.

The class $[\Gamma_{L}]$ is defined on the generic fibre of the family $$C\times_{k}\widetilde{\mathcal{X}}\longrightarrow\Delta'.$$ Taking the closures of the irreducible components of its support yields a class in $$\CH^{d}(C\times_{k}\mathcal{\widetilde{X}})$$ whose restriction to the geometric generic fibre coincides with $[\Gamma]$.

Let $$\phi_{\widetilde{\mathcal{X}}}\colon\widetilde{\mathcal{X}}\longrightarrow C\times_{k}\Delta'$$ be the morphism induced by $\phi_{\mathcal{X}}.$ Write $\Alb(\widetilde{X}_{0})$ (resp.\ $\Alb(X_{0})$) for the direct sum of the Albanese varieties of the irreducible components of $\widetilde{X}_0$ (resp.\ $X_0$). 
\par
It remains to show that if the sum morphism $\Alb(\widetilde X_0) \to \jac(C)$ admits a section, then the same holds for $\Alb(X_0) \to \jac(C)$. It suffices to verify that the natural morphism $$\Alb(\widetilde X_0) \longrightarrow \jac(C)$$ factors through $\Alb(X_0)\to\jac(C)$. To this end, note that each $R_{i,j}$ is a $\bbP^{1}_{k}$-bundle over $Y_{i,i+1}\subset X_{0i}.$ Let $$\al_{i,j}\colon\Alb(R_{i,j})\longrightarrow\jac(C)$$ be the homomorphism of Albanese varieties induced by restricting $\phi_{\widetilde{\mathcal{X}}}$ to $R_{i,j}$. Since $\al_{i,j}$ factors as $$\Alb(R_{i,j}) \longrightarrow \Alb(X_{0i}) \longrightarrow \jac(C)$$ for each $i,j$, it follows that the natural morphism $$\Alb(\widetilde{X}_0) = \Alb(X_0) \oplus \bigoplus_{i,j} \Alb(R_{i,j}) \longrightarrow \jac(C)$$ factors through $\Alb(X_0)\to\jac(C),$ as claimed. This completes the reduction step.
\end{proof}    
\end{step}
By Step \ref{step:reduction}, we may therefore assume that $[\Gamma]$ extends to a class $$[\Gamma_{\mathcal{X}}]\in\CH^{d}(C\times_{k}\mathcal{X}).$$ \par
For $1\leq i\leq n,$ let $$j_{i}\colon X_{0i}\longhookrightarrow\mathcal{X}$$ denote the closed immersion, and define $$[\Gamma_{i}]:=(\Id\times j_{i})^{\ast}[\Gamma_{\mathcal{X}}]\in\CH^{d}(C\times_{k}X_{0i}).$$ Let $$\alpha_{i}\colon\Ch(X_{0i})_{\hom}\longrightarrow\Alb(X_{0i})$$ be the Abel--Jacobi homomorphism on $0$-cycles, and set $$\beta_{i}:=\alpha_{i}\circ[\Gamma_{i}]_{\ast}\in\Hom\bigl(\jac(C),\Alb(X_{0i})\bigr).$$\par
We aim to show that the endomorphism 
\begin{equation}\label{eq:end}
\Phi\colon \jac(C)\overset{(\beta_{i})_{i}}{\longrightarrow}\bigoplus_{i=1}^{n}\Alb(X_{0i})\overset{\sum_{i=1}^{n}\al_{i}}{\longrightarrow} \jac(C)
\end{equation}
is the identity. \par
Fix a prime $\ell$ invertible in $k,$ and consider the composite \begin{equation}\label{eq:Psi_map}\Psi\colon H^{1}_{\et}\bigl(C\times\Delta,\Z_{\ell}\bigr)\overset{[\Gamma_{\mathcal{X}}]_{\ast}}{\longrightarrow}H^{2d-1}_{\et}\bigl(\mathcal{X},\Z_{\ell}\bigr)\overset{(\phi_{\mathcal{X}})_{\ast}}{\longrightarrow}H^{1}_{\et}\bigl(C\times\Delta,\Z_{\ell}\bigr).\end{equation}
A priori, the correspondence action \eqref{eq:action-correspondences} is defined only for algebraic cycles on a product $X\times Y$, where $X$ and $Y$ are smooth varieties and $X$ is proper. However, since the \'etale cohomology groups \eqref{eq:etale_cohomology} commute with filtered inverse limits of schemes with affine transition morphisms (see \cite[p. 88, III.1.16]{milne_etale}), the correspondence action induced by $[\Gamma_{\mathcal{X}}]_{\ast}$ remains well-defined in the present setting. The same argument applies to the pushforward map $(\phi_{\mathcal{X}})_{\ast}$ appearing in \eqref{eq:Psi_map}.
\par
Let $$j_{K}\colon C_{K}\longhookrightarrow C\times\Delta,\qquad j_{k}\colon C \longhookrightarrow C\times\Delta$$ denote the inclusions of the geometric generic fibre and special fibre, respectively.\par
By the smooth and proper base change theorem for $\ell$-adic \'etale cohomology, the pullback maps 
\begin{align}\label{eq:pullback}
\begin{split}
(j_{K})^{\ast}\colon &H^{1}_{\et}\bigl(C\times\Delta,\Z_\ell\bigr)\longrightarrow H^{1}_{\et}\bigl(C_{K},\Z_\ell\bigr),\\
(j_{k})^{\ast}\colon &H^{1}_{\et}\bigl(C\times\Delta,\Z_\ell\bigr)\longrightarrow H^{1}_{\et}\bigl(C,\Z_\ell\bigr),
\end{split}
\end{align} 
are isomorphisms; see \cite[Corollary VI.4.2]{milne_etale}. By assumption \eqref{it:correspondence}, the endomorphism $(\phi_{X_{K}})_{\ast}\circ[\Gamma]_{\ast}\in\End(\jac(C_{K}))$ is the identity.
The diagram \begin{equation*}\begin{tikzcd}
{H^{1}_{\et}\bigl(C\times\Delta,\Z_\ell\bigr)} \arrow[r, "\Psi"] \arrow[d, "(j_{K})^{\ast}"]    & {H^{1}_{\et}\bigl(C\times\Delta,\Z_\ell\bigr)} \arrow[d, "(j_{K})^{\ast}"] \\
{H^{1}_{\et}\bigl(C_{K},\Z_\ell\bigr)} \arrow[r, "{(\phi_{X_{K}})_{\ast}\circ[\Gamma]_{\ast}}"] & {H^{1}_{\et}\bigl(C_{K},\Z_\ell\bigr)} \end{tikzcd}
\end{equation*}
clearly commutes, with the vertical arrows given by the first isomorphism in \eqref{eq:pullback}. It follows that $\Psi$ is the identity map.\par
Next, observe that for any class $\gamma\in H^{2d-1}_{\et}(\mathcal{X},\Z_\ell)$ we have the identities \begin{equation}\label{eq:projection_formula}(j_{k})^{\ast}(\phi_{\mathcal{X}})_{\ast}(\gamma)=(\phi_{\mathcal{X}})_{\ast}(\gamma)\cup[C]=(\phi_{\mathcal{X}})_{\ast}\bigl(\gamma\ \cup\ [X_{0}]\bigr)=\sum_{i=1}^{n}(\phi_{\mathcal{X}})_{\ast}\bigl(\gamma\ \cup\ [X_{0i}]\bigr),\end{equation}
where the second equality follows from the projection formula; cf. \cite[(1.3.3) and Example (2.1)]{Bloch-Ogus1974}.\par
All operations in \eqref{eq:projection_formula} are compatible with $\ell$-adic \'etale cohomology with support \eqref{eq:coh_support_Z_l}. In particular, for each $i$ one has  $$\gamma\ \cup\ [X_{0i}]\in H^{2d+1}_{X_{0i}}\bigl(\mathcal{X},\Z_\ell\bigr),$$ and $(\phi_{\mathcal{X}})_{\ast}$ denotes the pushforward map $$H^{2d+1}_{X_{0i}}\bigl(\mathcal{X},\Z_\ell\bigr)\longrightarrow H^{3}_{C}\bigl(C\times_{k}\Delta,\Z_\ell\bigr)$$ induced by $\phi_{\mathcal{X}}$.\par
Let $$\phi_{0i}\colon X_{0i}\longrightarrow C$$ denote the restriction of $\phi_{\mathcal{X}}$ to the irreducible component $X_{0i}\subset X_{0}$. By Gabber's absolute purity theorem (see, for example, \cite[Theorem 2.3.1]{ColliotSkoro2021}), identity \eqref{eq:projection_formula} can be rewritten as \begin{equation}\label{eq:projection_formula'}(j_{k})^{\ast}(\phi_{\mathcal{X}})_{\ast}(\gamma)=\sum_{i=1}^{n}(\phi_{0i})_{\ast}(\gamma|_{X_{0i}})\in H^{1}_{\et}\bigl(C,\Z_\ell\bigr),\end{equation}
where $(\phi_{0i})_{\ast}\colon H^{2d-1}_{\et}(X_{0i},\Z_\ell)\to H^{1}_{\et}(C,\Z_\ell)$ is the natural pushforward induced by $\phi_{0i},$ and $\gamma|_{X_{0i}}:=(j_{i})^{\ast}(\gamma)\in H^{2d-1}_{\et}(X_{0i},\Z_{\ell})$.\par
We claim that the diagram
\begin{align} \label{diagram:specialization}
\begin{split}
\xymatrixcolsep{1.4pc}
\xymatrix{
H^1_\et\bigl(C \times \Delta, \ZZ_\ell\bigr) \ar[r]^-{[\Gamma_{\mathcal{X}}]_\ast}\ar[d]_-\wr^-{j_k^\ast} & H^{2d-1}_\et\bigl(\ca X, \ZZ_\ell\bigr) \ar[r]^-{(\phi_{\ca X})_{\ast}} \ar[d] & H^1_\et\bigl(C \times \Delta, \ZZ_\ell\bigr) \ar[d]_-\wr^-{j_k^\ast} \\
H^1_\et\bigl(C,\ZZ_\ell\bigr) \ar[r]^-{([\Gamma_{i}]_\ast)_i} & \bigoplus_{i = 1}^n H^{2d-1}_\et\bigl(X_{0i},\ZZ_\ell\bigr) \ar[r]^-{\sum_{i=1}^n (\phi_{0i})_{\ast}} & H^1_\et\bigl(C, \ZZ_\ell\bigr)
}
\end{split}
\end{align}
commutes, where the vertical map in the middle sends $\gamma \in H^{2d-1}_\et(\ca X,\ZZ_\ell)$ to $$(\gamma|_{X_{0i}})_i\in\bigoplus_{i=1}^{n}H^{2d-1}_\et\bigl(X_{0i},\ZZ_\ell\bigr).$$ Indeed, the commutativity of the left square is immediate, and that of the right square follows from identity \eqref{eq:projection_formula'}. Hence the bottom row of \eqref{diagram:specialization} is the identity.\par 
Moreover, this map agrees with the homomorphism $T_{\ell}\Phi$ induced by $\Phi$ (defined in \eqref{eq:end}) on the Tate module $$T_{\ell}\jac(C)\coloneqq \varprojlim_{\nu}\jac(C)[\ell^{\nu}].$$
Consequently, $$T_{\ell}\Phi=\Id\in\End_{\Z_{\ell}}\bigl(T_{\ell}\jac(C)\bigr).$$ Since the endomorphism ring of an abelian variety is torsion-free, it follows from \cite[Theorem 12.5]{milne1986abelian} that the natural homomorphism $$\End\bigl(\jac(C)\bigr) \longrightarrow \End_{\Z_{\ell}}\bigl(T_\ell\jac(C)\bigr),\quad f\longmapsto T_{\ell}f$$ is injective. We therefore conclude that $$\Phi=\Id\in\End\bigl(\jac(C)\bigr).$$ This completes the proof of the theorem.\end{proof}

\begin{corollary}\label{cor:obstruction}
We adopt the notation and assumptions of Theorem \ref{thm:obstruction}, replacing condition \eqref{it:correspondence} with the following:
\begin{enumerate}
    \item[\((2')\)]\label{it:correspondence-prime} There exists an algebraically closed field extension $F/K$ and a correspondence
$$
[\Gamma] \in \CH^{d}(C \times_k X_{F})
$$
inducing a splitting of the pushforward map
$$
(\phi_{X_F})_\ast\colon \Ch(X_{F})_{\hom}\longrightarrow\jac(C_{F}).
$$
\end{enumerate}
Assume moreover that $\End(\jac(C))$ is a principal ideal domain (PID). Then the sum morphism $$\sum_{i=1}^{n}\al_{i}\colon\bigoplus_{i=1}^{n}\Alb(X_{0i})\longrightarrow\jac(C)$$
admits a section. 
\end{corollary}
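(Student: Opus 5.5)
The plan is to reduce condition $(2')$ to condition \eqref{it:correspondence} of Theorem \ref{thm:obstruction} by a spreading-out and specialization argument, and then apply Theorem \ref{thm:obstruction} verbatim. The ingredient that makes this possible is that $C$ is defined over $k$. So every endomorphism of $\jac(C_F)$ is already defined over $k$: the natural maps $\End(\jac(C))\to\End(\jac(C_K))\to\End(\jac(C_F))$ are bijective, by rigidity and the fact that $k$ is algebraically closed. Hence any identity between endomorphisms can be checked after an arbitrary algebraically closed base change.

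\emph{Step 1 (spreading out).} Since $\CH^d$ commutes with filtered colimits, the cycle $[\Gamma]\in\CH^d(C\times_k X_F)$ is defined over a finitely generated $K$-subalgebra $A\subset F$. After shrinking $\Spec A$, we obtain a smooth irreducible $K$-variety $V$ and a relative cycle $[\Gamma_V]\in\CH^d(C\times_k X_K\times_K V)$ whose pullback to the geometric generic point $\Spec F\to V$ is $[\Gamma]$.

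\emph{Step 2 (the composite as a family of endomorphisms).} For each $K$-point $v\in V(K)$, refined Gysin pullback gives a specialized cycle $[\Gamma_v]\in\CH^d(C\times_k X_K)$. We then get $e_v:=(\phi_{X_K})_\ast\circ[\Gamma_v]_\ast\in\End(\jac(C_K))=\End(\jac(C))$. I would show that $v\mapsto e_v$ comes from a single morphism $V\times\jac(C)\to\jac(C)$ which is a homomorphism in the second factor. This uses regularity of the Abel--Jacobi map, applied to the cycle $[\Gamma_V]$ viewed as a family of correspondences parametrised by $\jac(C)\times V$, via \cite[Corollary 1.6.3]{Murre}-type arguments. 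By rigidity of homomorphisms of abelian varieties over a connected base, $e_v$ is then constant on $V$. Comparing with the geometric generic point shows that this constant value is $e_{\bar\eta}=(\phi_{X_F})_\ast\circ[\Gamma]_\ast=\Id$. Any $v\in V(K)$, which exists because $K$ is algebraically closed, therefore yields a correspondence $[\Gamma_v]$ satisfying condition \eqref{it:correspondence}. Theorem \ref{thm:obstruction} then gives the section.

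\emph{Main obstacle and the role of the PID hypothesis.} The delicate point is Step 2: proving that the specialized action is literally the specialization of the generic action on the nose, rather than only up to a multiple. If this integral compatibility proves hard, I would fall back on a multisection argument. Choose a closed point of $V$, i.e.\ a finite extension, together with its norm; as in Step \ref{step:reduction}, this produces correspondences over $K$ whose composites with $(\phi_{X_K})_\ast$ equal $N\cdot\Id$, for degrees $N$ ranging over a set with $\gcd$ equal to $1$. More generally these composites run over a collection of endomorphisms generating the unit ideal. The proof of Theorem \ref{thm:obstruction} shows that for each such correspondence the endomorphism factors through $\sum_i\al_i$ via some $\beta$. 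The set $I$ of endomorphisms $u$ of $\jac(C)$ of the form $(\sum_i\al_i)\circ\beta$ is a right ideal of $\End(\jac(C))$, and it would then contain a set of generators of the unit ideal. Since $\End(\jac(C))$ is a PID, $I$ is principal, so $I=\End(\jac(C))$ and $\Id\in I$. This gives the desired section of $\sum_{i=1}^n\al_i$.
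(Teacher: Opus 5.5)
Your main route is correct, and it differs from the paper's.

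\textbf{How the paper argues.} The paper works by contradiction. If $\Id$ is not in the image of $[\Gamma]\mapsto(\phi_{X_K})_\ast\circ[\Gamma]_\ast$, that image is a nonzero proper ideal of $\End(\jac(C_K))$. By the PID hypothesis it is generated by an isogeny $f$ of degree $m>1$. Lecomte's rigidity $\CH^{d}(C\times_k X_K)/m\cong\CH^{d}(C\times_k X_F)/m$ then produces a cycle over $K$ whose composite with $(\phi_{X_K})_\ast$ acts as the identity on $\jac(C_K)[m]$. This contradicts $\ker f\neq 0$.

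\textbf{How your route differs.} You spread $[\Gamma]$ out over a smooth connected $K$-variety $V$ and specialize at a $K$-point. The key input is that the induced endomorphism is a discrete invariant, hence constant in connected families. This avoids the rigidity of Chow groups with finite coefficients, and it never uses the PID hypothesis, so it proves the corollary for arbitrary $C$. The paper's argument, in exchange, needs neither a family version of regularity nor a comparison at the geometric generic point.

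\textbf{The point you flag as delicate.} This is not a real obstacle. Regularity gives an honest morphism, not one correct only up to a multiple; the multiple in \cite[Corollary 1.6.3]{Murre} concerns realizing a homomorphism by a correspondence, which you do not need. What does need care is that a morphism constructed over $K$ only controls $K$-points. The cleanest fix is to base change the whole family to $F$. Then $\bar\eta$ and every $v\in V(K)$ are $F$-points of the connected smooth variety $V_F$. Regularity of $(\phi_{X_F})_\ast$ for the family of $0$-cycles parametrized by $C_F\times V_F$ gives a morphism $C_F\times V_F\to\jac(C_F)$. The rigidity lemma, with $C_F$ complete, then shows that $e_v$ does not depend on the $F$-point $v$, so $(e_v)_F=e_{\bar\eta}=\Id$. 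Alternatively, the action of the family on $\jac(C)[\ell^\nu]$ is a section of a constant \'etale sheaf on the connected $V$. It is therefore constant, and you conclude via $T_\ell$ exactly as in the proof of Theorem \ref{thm:obstruction}.

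\textbf{Your fallback.} You should discard it. Since $K$ is algebraically closed, closed points of $V$ are $K$-points, so there are no norms and no family of degrees $N$ with $\gcd$ equal to $1$ to exploit. Moreover, a right ideal containing generators of the unit ideal is the whole ring regardless of any PID hypothesis, so the PID plays no role there.
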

\begin{proof}
By assumption (\hyperref[it:correspondence-prime]{2$'$}), there exists an algebraically closed field extension $F/K$ and a correspondence $$[\widetilde{\Gamma}] \in \CH^{d}(C \times_k X_{F})$$ such that the composite morphism $$(\phi_{X_F})_\ast\circ[\widetilde{\Gamma}]_{\ast}\in\End\bigl(\jac(C_{F})\bigr)$$ is the identity. It suffices to show that this implies \eqref{it:correspondence} in Theorem \ref{thm:obstruction}.\par
Suppose, for the sake of contradiction, that $\Id_{\jac(C_{K})}$ is not in the image of the homomorphism 
\begin{equation}\label{eq:hom_corr}\CH^{d}(C \times_k X_{K})\longrightarrow\End\bigl(\jac(C_{K})\bigr),\quad [\Gamma]\longmapsto (\phi_{X_{K}})_{\ast}\circ[\Gamma]_{\ast}.\end{equation} 
The pushforward map $$(\phi_{X_{K}})_{\ast}\colon \Ch(X_{K})_{\hom}\longrightarrow\jac(C_{K})$$ is regular and surjective. Consequently, by \cite[Corollary 1.6.3]{Murre}, the image of \eqref{eq:hom_corr} is non-zero. Recall that every endomorphism of $\jac(C_K)$ is induced by a correspondence on $C_{K}\times C_{K},$ and that \eqref{eq:hom_corr} is compatible with the action of $\CH^{1}(C_{K}\times C_{K})$. It follows that its image is a non-zero ideal $$\mathfrak{a}\subset\End(\jac(C_K)).$$ Since $\End(\jac(C_K))$ is a PID, we can write $\mathfrak{a}=(f)$ for some isogeny $f\in\End(\jac(C_K))$ with $m:=\deg(f)>1$. Thus, for every $$ [\Gamma]\in\CH^{d}(C \times_k X_{K}),$$ the induced endomorphism $(\phi_{X_K})_\ast\circ[\Gamma]_{\ast}$ has a non-trivial kernel on the $m$-torsion subgroup $$\jac(C_{K})[m]=\jac(C_{F})[m].$$ By \cite[Théorème 3.11]{lecomte}, the base-change homomorphism $$\CH^{d}(C \times_k X_{K})/m\longrightarrow \CH^{d}(C \times_k X_{F})/m$$ is an isomorphism. Hence there exists a cycle $[\Gamma]\in\CH^{d}(C \times_k X_{K})$ whose base change $[\Gamma_{F}]$ agrees with $[\widetilde{\Gamma}]$ modulo $m$. It follows that the endomorphism $(\phi_{X_K})_\ast\circ[\Gamma]_{\ast}$ restricts to the identity on $\jac(C_{K})[m]$, contradicting the previous observation.\par
This contradiction completes the proof.
\end{proof}

\begin{remark}\label{rem:base_fibration}
In the setting of Theorem \ref{thm:obstruction} (resp. Corollary \ref{cor:obstruction}), the assumption that $C$ is a curve is inessential. More generally, one may replace $C$ by an arbitrary smooth projective variety $B,$ the Jacobian $\jac(C)$ by the Albanese variety $\Alb(B)$ of $B,$ and $[\Gamma]$ by an algebraic correspondence in $$\CH^{d}(\Alb(B)\times_k X_{K}).$$ \par
We restrict to the case where $C$ is a curve only because this situation naturally arises for smooth projective varieties with representable $\Ch$-group. Indeed, if $X$ is such a variety, then applying Stein factorization to its Albanese morphism $$\al_{X}\colon X\longrightarrow\Alb(X)$$ produces a fibration $\phi_X \colon X \to C$ onto a smooth projective curve $C$ with $\jac(C) \cong \Alb(X)$; see Lemma \ref{lem:structure_varieties_rep_CH_0}.
\end{remark}

The following result is a further generalization of Theorem \ref{thm:obstruction}. We state it without proof, as the argument proceeds along the same lines. In particular, it is useful for constructing examples over $\bar{\bbQ}$.

\begin{theorem}\label{thm:obstruction'}
Let $R$ be a discrete valuation ring, and set $\Delta := \Spec R$. Fix an algebraic closure $K$ of the fraction field of $R$.

    Let $\pi\colon \ca C\to\Delta$ be a smooth, projective morphism of relative dimension $1$ with geometrically connected fibres. Denote by $C_0$ the geometric special fibre of $\pi$ and by $C_K$ its geometric generic fibre.

Let $p \colon \mathcal{X} \to \Delta$ be a flat, projective morphism of relative dimension $d$ with regular total space, and let $\phi_{\mathcal{X}} \colon \mathcal{X} \to \ca C $ be a surjective morphism.
Denote by $X_0$ the geometric special fibre of $p$ and by
$\phi_{X_0} \colon X_0 \to C_0$ the induced morphism. Let $X_{K
}$ be the geometric generic fibre of $p$, and write $\phi_{X_K} \colon X_K \to C_K$
for the base change of $\phi_{\mathcal{X}}$ to $K$.\par
Assume the following:
\begin{enumerate}
    \item \label{it:X_0'} $X_{0}=\sum_{i=1}^{n}X_{0i}$ is a reduced simple normal crossings divisor on $\mathcal{X}$ whose dual graph is a chain; see \eqref{it:X_0}.
    \item \label{it:correspondence_2} 
    There exists a correspondence
$$
[\Gamma] \in \CH^{d}(C_K \times_K X_{K})
$$
inducing a splitting of the pushforward map
$$
(\phi_{X_K})_\ast\colon \Ch(X_{K})_{\hom}\longrightarrow\jac(C_{K}).
$$
    
\end{enumerate}
\par 
For each $i,$ let $$\al_{i}\colon\Alb(X_{0i})\longrightarrow \jac(C_0)$$ be the homomorphism of Albanese varieties induced by the restriction of $\phi_{X_0}$ to $X_{0i}$. Then the induced sum morphism $$\sum_{i=1}^{n}\al_{i}\colon\bigoplus_{i=1}^{n}\Alb(X_{0i})\longrightarrow\jac(C_0)$$
admits a section. 
\end{theorem}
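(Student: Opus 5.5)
We follow the proof of Theorem \ref{thm:obstruction}, adjusting for the two new features: the base curve now varies in the smooth family $\pi\colon\ca C\to\Delta$, and $R$ is an arbitrary discrete valuation ring, with possibly imperfect or non-closed residue field and possibly mixed characteristic. First we reduce to $R$ strictly Henselian. Passing to the strict Henselization $R^{sh}$ (or its completion) changes neither $K$ nor the geometric special fibres $X_0$ and $C_0$. It also preserves regularity of $\mathcal{X}$ and the simple normal crossings property of the special fibre. Here one should check that the chain structure is taken geometrically, which we read into hypothesis \eqref{it:X_0'}. The residue field is then separably closed. Since Albanese varieties and the existence of a section of $\sum\al_i$ can be checked after passing to an algebraic closure, we may work with the geometric components.

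\textbf{Spreading out and semistable reduction.} Next we reproduce Step \ref{step:reduction}. Since $C_K\times_K X_K$ is the filtered limit of $C_L\times_L X_L$ over finite extensions $L$ of the fraction field, the cycle $[\Gamma]$ descends to some $[\Gamma_L]$. Let $R'$ be the integral closure of $R$ in $L$; after strict Henselization it is again a discrete valuation ring. Base-change $\mathcal{X}$ and $\ca C$ to $R'$. The family $\ca C_{R'}$ remains smooth. We then resolve $\mathcal{X}_{R'}$ to a model whose special fibre is again a chain, with new components that are $\bbP^1$-bundles over the double curves $X_{0i}\cap X_{0i+1}$. This is the main obstacle. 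Hartl's construction in \cite{Hartl2001SemistabilityAB} is stated for base change of strictly semistable schemes along $t\mapsto \mu^m$. In mixed characteristic, or when $L$ is wildly ramified, the extension is not of this tame form. For a chain, however, the local equation near a double locus is $xy=\varpi$, and base change gives $xy=\varpi'^{\,e}$. This is an $A_{e-1}$-singularity along $X_{0i}\cap X_{0i+1}$, and it is resolved by $e-1$ blow-ups regardless of characteristic. We would verify this local computation directly. The argument in Step \ref{step:reduction} then shows that the Albanese varieties of the new $\bbP^1$-bundle components map through $\Alb(X_{0i})$. Hence it suffices to treat the new model. Taking closures extends $[\Gamma_L]$ to $[\Gamma_{\mathcal{X}}]\in\CH^d(\ca C\times_\Delta\mathcal{X})$; since $\ca C$ is smooth over $\Delta$, this product is regular.

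\textbf{Cohomological argument.} Fix a prime $\ell$ invertible in $R$. Define
\[
\Psi\colon H^1_{\et}(\ca C,\Z_\ell)\xrightarrow{[\Gamma_{\mathcal{X}}]_\ast}H^{2d-1}_{\et}(\mathcal{X},\Z_\ell)\xrightarrow{(\phi_{\mathcal{X}})_\ast}H^1_{\et}(\ca C,\Z_\ell),
\]
twisted as needed. Smooth and proper base change shows that restriction to $C_K$ and to $C_0$ are isomorphisms on $H^1$. Then $\Psi=\Id$ because the hypothesis holds on the generic fibre. For the special fibre we use the projection formula $j_0^\ast(\phi_{\mathcal X})_\ast\gamma=(\phi_{\mathcal X})_\ast(\gamma\cup[X_0])$, which uses that $C_0$ is the Cartier divisor $\pi^\ast(\varpi)$. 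Combining it with Gabber's absolute purity, valid for regular schemes in mixed characteristic, gives the analogue of \eqref{eq:projection_formula'}. From this we obtain the commutative diagram \eqref{diagram:specialization}, with $C\times\Delta$ replaced by $\ca C$ and $C$ by $C_0$. Its bottom row is then the identity on $H^1_{\et}(C_0,\Z_\ell)$.

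\textbf{Conclusion.} Set $\beta_i=\alpha_i\circ[\Gamma_i]_\ast$, where $[\Gamma_i]$ is the restriction of $[\Gamma_{\mathcal{X}}]$ to $C_0\times X_{0i}$. The bottom row of the diagram equals $T_\ell\Phi$ for $\Phi=\sum_i\al_i\circ\beta_i\in\End(\jac(C_0))$. Injectivity of $\End\to\End(T_\ell)$, by \cite[Theorem 12.5]{milne1986abelian}, gives $\Phi=\Id$, so $(\beta_i)_i$ is a section of $\sum_i\al_i$. Apart from the resolution step, the argument transfers verbatim.
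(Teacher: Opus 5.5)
Your proposal is correct and follows the paper's approach. The paper gives no separate proof of this theorem, saying only that it proceeds along the same lines as Theorem \ref{thm:obstruction}, and that is what you carry out: descend $[\Gamma]$ to a finite extension, extend it over a resolved base-changed model whose special fibre is still a chain, establish the specialization diagram on $H^1$, and conclude from the injectivity of $\End(\jac(C_0))\to\End(T_\ell\jac(C_0))$. Your additional steps fill in details the paper leaves implicit. These are the reduction to a strictly Henselian base, the direct resolution of the $A_{e-1}$-singularities (possible because a chain has no triple points, so wild or mixed-characteristic ramification causes no trouble), and the use of absolute purity in place of the limit argument. One small correction: passing to the completion, unlike the strict Henselization, does enlarge $K$; this is harmless because hypothesis (2) survives base change to a larger algebraically closed field.
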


We also record the corresponding generalization of Corollary \ref{cor:obstruction}. The proof is analogous and is therefore omitted.

\begin{corollary}\label{cor:obstruction'}
We adopt the notation and assumptions of Theorem \ref{thm:obstruction'}, replacing condition \eqref{it:correspondence_2} with the following:
\begin{enumerate}
    \item[\((2')\)]\label{it:correspondence-prime_2} There exists an algebraically closed field extension $F/K$ and a correspondence
$$
[\Gamma] \in \CH^{d}(C_F \times_F X_{F})
$$
inducing a splitting of the pushforward map
$$
(\phi_{X_F})_\ast\colon \Ch(X_{F})_{\hom}\longrightarrow\jac(C_{F}).
$$
\end{enumerate}
Assume moreover that $\End(\jac(C_K))$ is a principal ideal domain (PID). Then the sum morphism $$\sum_{i=1}^{n}\al_{i}\colon\bigoplus_{i=1}^{n}\Alb(X_{0i})\longrightarrow\jac(C_0)$$
admits a section. 
\end{corollary}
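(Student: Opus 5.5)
The proof follows that of Corollary \ref{cor:obstruction}: we show that hypothesis (\hyperref[it:correspondence-prime_2]{2$'$}) implies hypothesis \eqref{it:correspondence_2} of Theorem \ref{thm:obstruction'}, and then apply that theorem. Fix a correspondence $[\widetilde{\Gamma}]\in\CH^{d}(C_F\times_F X_F)$ with $(\phi_{X_F})_\ast\circ[\widetilde{\Gamma}]_\ast=\Id$.

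\textbf{Image of the correspondence map is an ideal.} Consider the homomorphism
\[
\CH^{d}(C_K\times_K X_K)\longrightarrow\End\bigl(\jac(C_K)\bigr),\qquad [\Gamma]\longmapsto (\phi_{X_K})_\ast\circ[\Gamma]_\ast .
\]
Since $(\phi_{X_K})_\ast$ is regular and surjective, \cite[Corollary 1.6.3]{Murre} shows that the image is nonzero. Precomposing $[\Gamma]$ with correspondences in $\CH^1(C_K\times_K C_K)$, which realize every endomorphism of $\jac(C_K)$, shows that the image is a left ideal $\mathfrak a$. Because $\End(\jac(C_K))$ is a PID, $\mathfrak a=(f)$ for some isogeny $f$.

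\textbf{Contradiction if the identity is not reached.} Suppose $\Id\notin\mathfrak a$. Then $m:=\deg f>1$, and every endomorphism in $\mathfrak a$ kills a nonzero element of $\jac(C_K)[m]$. Since $K$ and $F$ are algebraically closed, $\jac(C_K)[m]=\jac(C_F)[m]$.

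We now invoke Lecomte's rigidity result \cite[Théorème 3.11]{lecomte}. Here the variety is $C_K\times_K X_K$ over $K$ rather than $C\times_k X_K$, but the statement is the same: it gives an isomorphism
\[
\CH^d(C_K\times_K X_K)/m\xrightarrow{\ \sim\ }\CH^d(C_F\times_F X_F)/m .
\]
Hence there is $[\Gamma]$ over $K$ with $[\Gamma_F]\equiv[\widetilde\Gamma]\pmod m$.

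We then need that the action of a correspondence on $m$-torsion of the Jacobian depends only on its class modulo $m$. Once this holds, $(\phi_{X_K})_\ast\circ[\Gamma]_\ast$ acts as the identity on $\jac(C_K)[m]$. This contradicts the previous step, so $\Id\in\mathfrak a$, which is hypothesis \eqref{it:correspondence_2}. Theorem \ref{thm:obstruction'} then yields the section.

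\textbf{Main obstacle.} The delicate point is checking that the mod $m$ reduction is compatible with the action on $m$-torsion points, and that Lecomte's theorem applies over the non-Henselian base field $K$ in characteristic possibly dividing $m$.

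For the first issue, we argue with divisible groups: a correspondence $m\cdot Z$ sends a torsion point $x$ with $mx=0$ to $m\cdot(Z_\ast x)$. This vanishes only after we know that the map factors through the regular homomorphism $\Ch_{\hom}\to\jac(C_K)$, i.e. $Z_\ast x\in\jac$ torsion-compatibly. This factorization holds because the composite $(\phi_{X_K})_\ast\circ Z_\ast$ is a homomorphism of abelian varieties.

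For the second issue, if $\operatorname{char}$ divides $m$, we would first reduce to the prime-to-$p$ part. Alternatively, since the residue field of $R$ is arbitrary here, we would restrict to cases where Lecomte's theorem applies, as is implicit in the corresponding step of Corollary \ref{cor:obstruction}.
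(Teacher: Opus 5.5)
This is correct and is the paper's argument. The paper omits the proof as analogous to that of Corollary~\ref{cor:obstruction}, and you carry out exactly that analogue: the image $\mathfrak a=(f)$ of the correspondence map, rigidity modulo $m=\deg f$, and a contradiction, correctly working on $C_K\times_K X_K$ instead of $C\times_k X_K$.

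The hesitations in your last paragraph can be dropped. Lifting classes modulo $m$ from $F$ to $K$ works for every $m$ by the usual spreading-out argument. Writing $[\Gamma_F]=[\widetilde\Gamma]+m[Z]$ gives $\Id=(\phi_{X_K})_\ast[\Gamma]_\ast-m\,(\phi_{X_F})_\ast[Z]_\ast\in\mathfrak a$ directly, because $m=\hat f\circ f\in\mathfrak a$. This avoids torsion points and any inseparability of $f$ in positive characteristic. Running the same computation with Murre's $[n]\in\mathfrak a$ shows that the PID hypothesis is not even needed.
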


\section{A degeneration of a Bielliptic surface of type 2}\label{sec:degen_bielliptic}

 Semistable degenerations of surfaces of Kodaira dimension zero were classified by Kulikov \cite{Kulikov1977}, Persson--Pinkham \cite{persson1981degeneration}, and Morrison \cite{Morrison1981Semistable}. In the bielliptic case, among the possible central fibres are cycles of elliptic ruled surfaces (see \cite[Theorem 1.8 (1.8.2)]{Morrison1981Semistable}); chains of elliptic ruled surfaces occur only for types $1$ and $2$ (see \cite[Theorem 1.8 (1.8.3)(c)]{Morrison1981Semistable}).
\par
The latter are crucial for our purposes. We construct explicitly such a degeneration for a bielliptic surface of type $2$, thereby obtaining a situation to which the obstruction to the existence of a universal $0$-cycle introduced in \S \ref{sec:obstruction} applies.\par Theorem \ref{thm:intro_degeneration_S} follows from the more precise statement established below.

\begin{theorem}\label{thm:degeneration_S} Let $k$ be an algebraically closed field of characteristic different from $2,$ and let $E$ be an elliptic curve over $k$. Set $\Delta:=\Spec k[[t]],$ and fix an algebraic closure $K$ of its function field. Then there exists a regular, flat, projective scheme $\mathcal{S}\to\Delta$ together with a morphism $$\phi_{\mathcal{S}}\colon\mathcal{S}\longrightarrow E\times\Delta,$$ such that the following hold:
\begin{enumerate}
    \item\label{it:generic_fibre'} The geometric generic fibre $S_{K}$ is a bielliptic surface of type 2, and the induced morphism $\phi_{S_{K}}\colon S_{K}\to E\times_{k} K$ coincides with the Albanese fibration of $S_{K}$.
    \item\label{it:special_fibre'} The special fibre $S_{0}=R_{1}\cup R_{2}$ is a reduced simple normal crossings divisor on $\mathcal{S}$ with exactly two irreducible components. Each surface $R_{i}$ is a minimal ruled surface over an \'etale double cover $E_{i}\to E,$ and the two double covers are not isomorphic.\par
    The intersection $$C:=R_{1}\cap R_{2}$$ is a smooth, irreducible elliptic curve, embedded in each $R_{i}$ as a $2$-fold multisection. 
\end{enumerate}  
\end{theorem}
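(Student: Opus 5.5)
The plan is to build $\mathcal{S}$ as a free quotient $(E'\times_k\mathcal{F})/\Gi$, with $\Gi\cong\Z/2\times\Z/2$. Here $E'$ is a constant elliptic curve and $\mathcal{F}\to\Delta$ is a regular model of a Tate curve whose special fibre is a Néron $4$-gon. Everything will follow from tracking how $\Gi$ permutes the four components of the special fibre.

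\emph{The constant curve.} Take $E':=E$ and let $\Gi:=E[2]=\langle a,b\rangle$ act on $E'$ by translations. The isogeny $[2]\colon E'\to E'/\Gi\cong E$ then supplies the morphism to $E$.

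\emph{The degenerating curve.} Let $\mathcal{F}\to\Delta$ be the minimal regular model of the Tate curve $\mathbb{G}_m/t^{4\Z}$ over $k((t))$; it exists because $\operatorname{char}k\neq2$. This model is regular and proper over $\Delta$, with special fibre a reduced cycle $F_0=\bigcup_{i\in\Z/4}P_i$ of four smooth rational curves. Its $2$-torsion sections include $c:=t^{2}$, and translation by $c$ extends to an automorphism of $\mathcal{F}$ acting on the components by $P_i\mapsto P_{i+2}$. The inversion $x\mapsto x^{-1}$ also extends, acting by $P_i\mapsto P_{-i}$.

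\emph{The group action.} Let $\Gi$ act on $E'\times_k\mathcal{F}$ through the generators
\[
g_1=(t_a,\,-1),\qquad g_2=(t_b,\,t_c).
\]
The action is free because every non-trivial element of $\Gi$ translates the first factor by a non-zero point of $E[2]$. The product $E'\times_k\mathcal{F}$ is regular, being a smooth curve times a regular scheme, so the quotient $\mathcal{S}$ is regular, flat and projective over $\Delta$. Its special fibre is reduced and simple normal crossings, because étale-locally it looks like that of $E'\times\mathcal{F}$. The projection to $E'$ descends to $\phi_{\mathcal{S}}\colon\mathcal{S}\to E\times\Delta$.

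\emph{Generic fibre.} On the generic fibre the action is exactly the Bagnera--de Franchis type~$2$ action: $\Gi\cong\Z/2\times\Z/2$ acts by translations on $E'$, and by $-1$ and translation by a $2$-torsion point on $F_K$. So $S_K$ is bielliptic of type $2$. The map $\phi_{S_K}$ is the projection $S_K\to E'/\Gi=E_K$, which is the Albanese fibration recalled in \S\ref{subsec:bielliptic}. This proves \eqref{it:generic_fibre'}.

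\emph{Components of the special fibre.} The special fibre is $\bigl(\bigsqcup_i E'\times P_i\bigr)/\Gi$, and $\Gi$ permutes the index set $\Z/4$ in two orbits, $\{0,2\}$ and $\{1,3\}$. Hence $S_0$ has exactly two components, $R_1$ and $R_2$.
\begin{itemize}
\item The stabilizer of $P_0$ is $\langle g_1\rangle$. Therefore $R_1=(E'\times P_0)/\langle g_1\rangle$, which is a $\bbP^1$-bundle over $E_1:=E'/\langle a\rangle$.
\item The stabilizer of $P_1$ is $\langle g_1g_2\rangle$, since $-(1+2)=1$ in $\Z/4$. Therefore $R_2$ is a $\bbP^1$-bundle over $E_2:=E'/\langle a+b\rangle$.
\end{itemize}
Both $E_1\to E$ and $E_2\to E$ are étale double covers. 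They are non-isomorphic because they correspond to distinct index-$2$ subgroups $\langle a\rangle\neq\langle a+b\rangle$ of $\Gi$. Each $R_i$ is minimal ruled because it is a $\bbP^1$-bundle.

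\emph{The double curve.} The four nodes of $F_0$ form a single $\Gi$-orbit with trivial stabilizers. For example, the node $P_0\cap P_1$ is sent by $g_1$ to $P_0\cap P_3$, by $g_2$ to $P_2\cap P_3$, and by $g_1g_2$ to $P_2\cap P_1$. Consequently $C=R_1\cap R_2\cong E'$ is a smooth irreducible elliptic curve. It maps with degree $2$ onto each $E_i$, so it is a $2$-fold multisection of each ruling. This proves \eqref{it:special_fibre'}.

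\emph{Main obstacle.} The delicate step is verifying that $t_c$ and $-1$ extend to automorphisms of the regular model $\mathcal{F}$ with the stated permutation of components. Concretely, one must show:
\begin{itemize}
\item the section $c=t^{2}$ meets the component $P_2$, so that $t_c$ rotates the $4$-gon by two steps;
\item inversion acts on the $4$-gon as the reflection $i\mapsto -i$.
\end{itemize}
For this I would use the explicit Tate-curve description of the Néron model and of its component group $\Z/4$, together with the fact that automorphisms of the generic fibre extend to the minimal regular model. If that becomes cumbersome, an alternative is to build the $4$-gon degeneration directly by gluing four copies of the standard local model $\Spec k[[t]][x,y]/(xy-t)$ equivariantly.
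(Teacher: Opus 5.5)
Your proposal is correct and is essentially the paper's construction: the paper likewise divides $E\times_k\mathcal{F}$, with $\mathcal{F}$ the minimal regular model having an $\mathbf{I}_4$ special fibre, by the free $(\Z/2)^2$-action generated by $\tau_a\times\iota$ and $\tau_b\times\varphi_\epsilon$. Here $\varphi_\epsilon$ is translation by a $2$-torsion section meeting the opposite component, and the resulting components are $\bbP^1$-bundles over $E/a$ and $E/(a+b)$ meeting along a copy of $E$. The differences are organizational: the paper quotients in two steps, passing through a strictly semistable degeneration of a type-$1$ bielliptic surface with a three-component chain, while you quotient by the whole group at once and read off components and double curve from stabilizers of the components and nodes of the $4$-gon (also, $\mathrm{char}\,k\neq 2$ is really needed for $E[2]\cong(\Z/2)^2$, not for the existence of the Tate curve).
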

\begin{proof} Let $p_{\mathcal{F}}\colon\mathcal{F}\to\Delta$ be a minimal regular model of an elliptic curve with geometric generic fibre $F_{K}:=\mathcal{F}\times_{\Delta}K,$ and assume that the special fibre $F_{0}$ is of Kodaira type $\bf{I}_{4}$. Then $$F_{0}=C_{0}\cup C_{1}\cup C_{2}\cup C_{3},$$ where each $C_{i}\cong\bbP^{1}_{k}$. The components are arranged in a cycle: for $i\in\Z/4,$ $$C_{i}\cap C_{i+1}=\{\text{single node}\}, \quad C_{i}\cap C_{j}=\emptyset\ \text{if}\ j\not\equiv i\pm 1\ \text{(mod 4)}.$$
Each node $C_{i}\cap C_{i+1}$ is obtained by identifying the point $\infty\in C_{i}$ with the point $0\in C_{i+1}$.\par
The smooth locus $\mathcal{F}^{\mathrm{sm}}\to\Delta$ of $p_{\mathcal{F}}$ is the N\'eron model of its generic fibre. Let $\mathcal{F}^{\mathrm{sm}}[2]$ denote its $2$-torsion subgroup scheme; one has $$\mathcal{F}^{\mathrm{sm}}[2]\cong\mu_{2}\times(\Z/2)_{\Delta}.$$ This group scheme acts on $\mathcal{F},$ and the induced action on the special fibre is as follows: the factor $\mu_{2}$ acts on each component $C_{i}$ by multiplication by $\pm1,$ fixing the two nodes of $C_{i}$, while the factor $\Z/2$ permutes the components by sending $C_{i}$ to $C_{i+2}$.\par
Choose a non-trivial $2$-torsion class $$0\neq\epsilon\in\mathcal{F}^{\mathrm{sm}}[2]$$ whose restriction to the special fibre corresponds to the element $(1,\bar{1})\in\mu_{2}\times\Z/2,$ and let $\varphi_{\epsilon}\colon \ca F\longrightarrow\ca F$ denote translation by $\epsilon$.\par
The inversion morphism $$[-1]\colon F_{K}\longrightarrow F_{K},\quad x\longmapsto -x$$ extends canonically to an involution
\begin{equation*}
    \iota\colon \ca F\longrightarrow\ca F
\end{equation*}
over $\Delta$. On the special fibre $F_0,$ the automorphism $\iota$ induces, for each $i\in\Z/4,$ the isomorphism $$C_i\longrightarrow C_{-i},\quad (x_{0}:x_{1})\longmapsto (x_{1}:x_{0}).$$\par
Fix generators $a,b\in E[2],$ and let $\tau_{a},\tau_{b}\colon E\to E$ denote translation by $a$ and $b,$ respectively. Define
\begin{equation}\label{eq:inv_bielliptic}
    \sigma_{1}:=\tau_{a}\times_{k}\iota, \qquad 
    \sigma_{2}:=\tau_{b}\times_{k}\varphi_{\epsilon}.
\end{equation}
The involutions $\sigma_{i}$ act freely on $E\times_{k}\ca F,$ and they commute: $\sigma_{1}\circ\sigma_{2}=\sigma_{2}\circ\sigma_{1}$.\par
Set \begin{equation}\label{eq:deg_I}
\widetilde{\ca S}:=\bigl(E\times_{k}\ca F\bigr)/\sigma_{1}.\end{equation} The natural projection $\pr_{1}\colon E\times_{k}\ca F\to E\times_{k}\Delta$ descends to a morphism \begin{equation}\label{eq:albanese_I}\phi_{\widetilde{\ca S}}\colon \widetilde{\ca S}\longrightarrow \bigl(E/a\bigr)\times_{k}\Delta .\end{equation}
The family $\widetilde{\ca S}\to \Delta$ in \eqref{eq:deg_I} is a strictly semistable degeneration whose geometric generic fibre $\widetilde{S}_{K}$ is a bielliptic surface of type 1. The special fibre $\widetilde{S}_{0}$ has exactly three irreducible components $\widetilde{R}_{i},$ each a minimal elliptic ruled surface. More precisely, 
\begin{equation}\label{eq:components_I}
\widetilde{R}_{1} = \bigl(E \times C_0\bigr)/\sigma_1,\quad
\widetilde{R}_{2} =\bigl( E \times (C_1 \sqcup C_3)\bigr)/\sigma_1 \cong E \times \bbP^1_k,\quad
\widetilde{R}_{3} = \bigl(E \times C_2\bigr)/\sigma_1 .
\end{equation}

The dual graph of $\widetilde{S}_{0}$ is a chain with end components $\widetilde{R}_{1}$ and $ \widetilde{R}_{3}$. The intersections 
\begin{equation*}
    \widetilde{C}_{1,2}:= \widetilde{R}_{1}\cap \widetilde{R}_{2},\qquad \widetilde{C}_{2,3}:=\widetilde{R}_{2}\cap \widetilde{R}_{3}
\end{equation*}
are naturally isomorphic to the elliptic curve $E$. Moreover, the induced morphisms $$ \widetilde{C}_{1,2}\subset \widetilde{R}_{1} \longrightarrow E/a,\qquad \widetilde{C}_{2,3}\subset \widetilde{R}_{3} \longrightarrow E/a$$ coincide with the canonical double cover $$E\longrightarrow E/a.$$ In addition, the curves $\widetilde{C}_{i,i+1}$ embed as disjoint sections of the ruled surface $\widetilde{R}_{2}$.\par
The involution $\sigma_{2}$ in \eqref{eq:inv_bielliptic} descends to an automorphism $\widetilde{\sigma}_{2}$ of $\widetilde{\ca S}$. The quotient \begin{equation}\label{eq:deg_II}
    \ca S:=\widetilde{\ca S}/\widetilde{\sigma}_{2}\longrightarrow\Delta
\end{equation}
is a strictly semistable degeneration of a bielliptic surface of type 2. The involution $\widetilde{\sigma}_{2}$ identifies the components $\widetilde{R}_{1}$ and $\widetilde{R}_{3}$ and leaves $\widetilde{R}_{2}$ invariant. Under the isomorphism $\widetilde{R}_{2}\cong E \times \bbP^1_k$ in \eqref{eq:components_I}, its restriction is given by $$\sigma\colon\bigl(x,(x_{0}:x_{1})\bigr)\longmapsto \bigl(x+(a+b),(x_{1}:x_{0})\bigr).$$
It follows that the special fibre $S_{0}$ has exactly two irreducible components $R_{i},$ each a minimal elliptic ruled surface. More precisely, 
\begin{equation}\label{eq:components_II}
R_{1}\cong \widetilde{R}_{1}\cong \widetilde{R}_{3},\qquad R_{2}\cong \bigl(E\times\bbP^1_{k}\bigr)/\sigma.
\end{equation}
Their intersection $C:=R_{1}\cap R_{2}$ is isomorphic to $E,$ and embeds in each ruled surface $R_{i}$ as a double cover of its base $E_{i}$. In the notation of \eqref{it:special_fibre'}, the corresponding \'etale double covers $E_{i}\to E,\ (i=1,2),$ are given by \begin{equation}\label{eq:double_covers}
    E/a\longrightarrow E/E[2]\cong E,\qquad E/(a+b)\longrightarrow E/E[2]\cong E,
\end{equation}
respectively.\par
Finally, the morphism $\phi_{\widetilde{\ca S}}$ defined in \eqref{eq:albanese_I} descends, upon passage to the quotient by $\widetilde{\sigma}_{2}$, to a morphism 
\begin{equation}\label{eq:albanese_II}
\phi_{\ca S}\colon \ca S\longrightarrow \bigl(E/E[2]\bigr)\times_{k}\Delta\cong E\times_{k}\Delta,
\end{equation}
which restricts on the geometric generic fibre to the Albanese fibration of the bielliptic surface $S_{K}$.\par
This completes the proof.
\end{proof}

\begin{proof}[Proof of Theorem \ref{thm:intro_degeneration_S}] Let $\ca S\to\Delta$ be the degeneration constructed in Theorem \ref{thm:degeneration_S}, together with the induced morphism $$\phi_{\ca S}\colon \ca S\longrightarrow E\times_k\Delta.$$ Assertions \eqref{it:generic_fibre} and \eqref{it:special_fibre} of Theorem \ref{thm:intro_degeneration_S} follow directly from \eqref{it:generic_fibre'} and \eqref{it:special_fibre'} of Theorem \ref{thm:degeneration_S}. It therefore suffices to verify condition \eqref{it:albanese_S_0}.\par
To this end, we show that the sum morphism \begin{equation}\label{eq:sum}
    \Alb(R_{1})\times_k\Alb(R_{2})\longrightarrow E\end{equation}
admits no section under either assumption \ref{it:trivial_end} or \ref{it:CM_E}.\par
As in \eqref{eq:double_covers}, the natural morphisms $$ \Alb(R_{i})\longrightarrow E\qquad (i=1,2)$$ are given by the \'etale double covers 
$$E/a\longrightarrow E/E[2]\cong E,\qquad E/b\longrightarrow E/E[2]\cong E,$$
where $a,b\in E[2]$ are generators. Consequently, the morphism \eqref{eq:sum} can be identified with
$$E/a\times_k E/b\longrightarrow E/E[2]\cong E,$$
where the final isomorphism is induced by multiplication by $2$ on $E$.\par
Let $q_a\colon E\to E/a$ and $q_{b}\colon E\to E/b$ be the canonical quotient morphisms. Denote by $\hat{q}_a\colon E/a\to E$ and $\hat{q}_b\colon E/b\to E$ the dual isogenies of $q_a$ and $q_b,$ respectively. Assume first that property \ref{it:trivial_end} holds for $E$. Since $\End(E)=\Z,$ we have $$\Hom\bigl(E,E/a\bigr)=\Z\cdot q_a,\qquad \Hom\bigl(E,E/b\bigr)=\Z\cdot q_b .$$ It follows that any homomorphism $$f\colon E\longrightarrow E/a\times_{k} E/b$$ is of the form $f=([m_1]\circ q_a,[m_2]\circ q_b)$ for some integers $m_1,m_2$.\par
For any such $f$, its composition with the sum morphism \eqref{eq:sum} is trivial on $E[2].$ Hence the morphism \eqref{eq:sum} admits no section. This establishes condition \eqref{it:albanese_S_0} under the assumption of \ref{it:trivial_end}.\par
Next assume \ref{it:CM_E}. Let $$L:=\Q(\sqrt{c}), \qquad c\in\{-1,-2,-3,-11,-19,-43,-67,-163\},$$ and denote by $\mathcal O_{L}$ its ring of integers. By the Stark--Heegner theorem, $\mathcal{O}_L$ has class number one; hence $$\End(E)\cong\mathcal O_{L}$$ is a principal ideal domain.\par
For each of the above values of $c$, there exists a unique prime ideal $\mathfrak{p}\subset\mathcal{O}_{L}$ lying above $2$.\par Suppose, for the sake of contradiction, that there exist isogenies $$f_{a}\colon E\longrightarrow E/a,\qquad f_{b}\colon E\longrightarrow E/b$$
such that the composite morphism $$E\overset{(f_a,f_b)}{\longrightarrow}E/a\times E/b\overset{\hat{q}_a+\hat{q}_{b}}{\longrightarrow} E/E[2]\cong E$$
is the identity on $E$.\par
Define endomorphisms $$g_{a}:=\hat{q}_a\circ f_{a},\qquad  g_{b}:=\hat{q}_b\circ f_{b}.$$
Then $g_a$ and $g_b$ are isogenies of even degree, and by assumption $$g_a+g_b=\Id_E.$$ Since their degrees are divisible by $2,$ both lie in the unique prime ideal $\mathfrak{p}$ above $2$. However, the relation $g_a+g_b=1$ implies $$1\in(g_{a},g_{b}),$$ and therefore $1\in\mathfrak{p},$ a contradiction.\par
This proves condition \eqref{it:albanese_S_0} under the assumption of \ref{it:CM_E}, and completes the proof of Theorem \ref{thm:intro_degeneration_S}.
\end{proof}
\begin{example}\label{ex:example_with_section}
The Stark--Heegner theorem determines all quadratic imaginary number fields whose rings of integers are principal ideal domains. In particular, for $c<0$, the field $\Q(\sqrt{c})$ has class number one if and only if
$$
c\in\{-1,-2,-3,-7,-11,-19,-43,-67,-163\}.
$$

In the list of \ref{it:CM_E} (resp. \ref{it:CM_E_C}) we excluded the Heegner number $c=-7$. We now show that in this case the sum morphism
\begin{equation}\label{eq:q_a+q_b}
E/a \times E/b \overset{\hat{q}_a+\hat{q}_b}{\longrightarrow} E/E[2]\cong E
\end{equation}
admits a section for suitable generators $a,b\in E[2]$. Here $\hat{q}_a$ and $\hat{q}_b$ denote the dual isogenies of the canonical quotient morphisms
$$
q_a\colon E\longrightarrow E/a,
\qquad
q_b\colon E\longrightarrow E/b.
$$

Set
$$
\omega:=\frac{1+\sqrt{-7}}{2}.
$$
Then the ring of integers of $\Q(\sqrt{-7})$ is $\Z[\omega]$. Both $\omega$ and its complex conjugate $\bar{\omega}=1-\omega$ have norm $2$, and satisfy
$$
2=\omega\cdot\bar{\omega}.
$$
Let $g_\omega\in\End(E)\cong\Z[\omega]$ be the isogeny corresponding to $\omega$, which has degree $2$. Denote by $\hat{g}_\omega$ its dual isogeny, i.e. the endomorphism associated to $\bar{\omega}$. It follows that
$$
2\cdot\Id_E = g_\omega\circ\hat{g}_\omega,
\qquad
g_\omega+\hat{g}_\omega=\Id_E.
$$
The latter identity implies that the morphism \eqref{eq:q_a+q_b} admits a section for suitable generators $a,b\in E[2],$ as claimed.\par
Later, in Example \ref{ex:bielliptic_uni_0_cycle}, we show that elliptic curves with complex multiplication by the maximal order in $\Q(\sqrt{-7})$ give rise to explicit examples of bielliptic surfaces of type $2$ admitting a universal $0$-cycle. 
\end{example}

\begin{remark}\label{rem:degen_over_Q}
The construction of Theorem \ref{thm:degeneration_S} (equivalently, Theorem \ref{thm:intro_degeneration_S}) extends to yield a degeneration over $\Spec \Z_p^{\mathrm{sh}}$ whose geometric generic fibre is a bielliptic surface of type $2$ defined over $\bar{\bbQ}$ and whose special fibre satisfies \eqref{it:special_fibre'} and \eqref{it:albanese_S_0}.

Fix
$$
c \in \{-1,-2,-3,-11,-19,-43,-67,-163\},
$$
and let $E/\bbQ$ be an elliptic curve whose geometric endomorphism ring is the maximal order of the Heegner field $\Q(\sqrt{c})$; see \cite{lmfdb} for explicit examples. Choose an odd prime $p$ of good ordinary reduction for $E$. Let
$$
\pi \colon \ca E \longrightarrow \Spec \Z_{p}^{\mathrm{sh}}
$$
be the corresponding smooth model over the strict henselization of $\Z_p$; see \cite[Chapter I, \S4]{milne_etale}.

Let
$$
p_{\ca F} \colon \ca F \longrightarrow \Spec \Z_{p}^{\mathrm{sh}}
$$
be the minimal regular model of the elliptic curve $F/\bbQ$ defined by
$$
y^2 z = x(x-z)(x-p^4 z).
$$
Its special fibre is of Kodaira type $\mathbf{I}_4$. Replacing, in the proof of Theorem \ref{thm:degeneration_S}, the family
$$
E \times_k \ca F \longrightarrow \Delta
$$
by
$$
\ca E \times_{\Z_{p}^{\mathrm{sh}}} \ca F \longrightarrow \Spec \Z_{p}^{\mathrm{sh}},
$$
the argument carries over verbatim and yields the desired degeneration.
\end{remark}

\section{Main results}\label{sec:final}
\subsection{A bielliptic surface of type 2 admitting no universal 0-cycle} 
Theorem \ref{thm:main_result} follows immediately from the result below.
\begin{theorem}\label{thm:main_result_body}
Let $k$ be an algebraically closed field of characteristic $\neq 2.$ Let $E$ be an elliptic curve over $k$ such that either \ref{it:trivial_end} or \ref{it:CM_E} holds for $\End(E)$. Set $\Delta := \Spec k[[t]]$, and fix an algebraic closure $K$ of its function field. Let $\mathcal S \to \Delta$ be the degeneration constructed in Theorem \ref{thm:intro_degeneration_S}, together with the induced morphism
$$
\phi_{\mathcal S} \colon \mathcal S \longrightarrow E \times_k \Delta .
$$
Then for any algebraically closed field extension $F/K$, there exists no correspondence
$$
[\Gamma] \in \CH^{2}(E \times_k S_F)
$$
that induces a splitting of the pushforward map
\begin{equation*}
(\phi_{S_F})_{\ast} \colon \CH_0(S_F)_{\hom} \longrightarrow E_{F} .
\end{equation*}
\end{theorem}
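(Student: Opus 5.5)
The plan is to argue by contradiction, combining Corollary \ref{cor:obstruction} with Theorem \ref{thm:intro_degeneration_S}. Suppose that for some algebraically closed extension $F/K$ there were a correspondence $[\Gamma]\in\CH^{2}(E\times_k S_F)$ with $(\phi_{S_F})_\ast\circ[\Gamma]_\ast=\Id_{E_F}$. I would then apply Corollary \ref{cor:obstruction} with $C:=E$ (so $\jac(C)=E$), $d=2$, $\mathcal{X}:=\mathcal{S}$ and $\phi_{\mathcal{X}}:=\phi_{\mathcal{S}}$. Its conclusion, that $\sum_i\al_i\colon\bigoplus_i\Alb(R_i)\to E$ admits a section, directly contradicts property \eqref{it:albanese_S_0} of Theorem \ref{thm:intro_degeneration_S}.

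Most hypotheses are checked directly from Theorem \ref{thm:intro_degeneration_S}.
\begin{enumerate}
\item The scheme $\mathcal{S}\to\Delta$ is regular, flat and projective of relative dimension $2$.
\item The special fibre is a reduced snc divisor whose dual graph is a chain, by property \eqref{it:special_fibre}. Concretely, the construction gives two components meeting along a smooth irreducible elliptic curve.
\item Condition $(2')$ is exactly our hypothetical $[\Gamma]$. One should note that the pushforward $(\phi_{S_F})_\ast$ is identified with the Abel--Jacobi map, because $\phi_{S_K}$ is the Albanese fibration by property \eqref{it:generic_fibre}, and this remains so after base change to $F$.
\end{enumerate}
Surjectivity of $\phi_{\mathcal{S}}\colon\mathcal{S}\to E\times\Delta$ follows since it is surjective on the generic fibre and the map is proper.

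The remaining hypothesis is that $\End(E)$ is a PID. In case \ref{it:trivial_end} this is $\Z$. In case \ref{it:CM_E} it is the maximal order $\mathcal{O}_L$ of a Heegner field, which has class number one by the Stark--Heegner theorem, as already used in the proof of Theorem \ref{thm:intro_degeneration_S}.

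A small point needs care: Corollary \ref{cor:obstruction} invokes the PID property for $\End(\jac(C_K))$. Since $E$ is defined over the algebraically closed field $k$, I would justify $\End(E_K)=\End(E)$ by rigidity of endomorphisms of abelian varieties under extension of algebraically closed fields. I expect this to be the only mildly delicate step. Everything else is a direct citation, so the main content is already contained in Corollary \ref{cor:obstruction} and property \eqref{it:albanese_S_0}.
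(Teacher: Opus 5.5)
Your proposal is correct and is essentially the paper's own argument. The paper also assumes a splitting over $F$, applies Corollary \ref{cor:obstruction} to $\phi_{\mathcal S}\colon\mathcal S\to E\times_k\Delta$ to obtain a section of $\sum_i\al_i$, and contradicts property \eqref{it:albanese_S_0} of Theorem \ref{thm:intro_degeneration_S}. Your explicit checks of surjectivity, of the PID property of $\End(E)$ via Stark--Heegner, and of $\End(E_K)=\End(E)$ by rigidity are details the paper leaves implicit.
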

\begin{proof} Suppose, for contradiction, that there exists an algebraically closed field extension $F/K$ and a correspondence
$$
[\Gamma] \in \CH^{2}(E \times_k S_F)
$$
such that the induced endomorphism $$(\phi_{S_F})_{\ast}\circ[\Gamma]_{\ast}\in\End\bigl(E_{F}\bigr)$$ is the identity.\par
Let $$S_0=\sum_{i=1}^nR_i$$ denote the special fibre of the family $\ca S\to\Delta,$ as in \eqref{it:special_fibre} of Theorem \ref{thm:intro_degeneration_S}. For each $i,$ let $\al_i\colon\Alb(R_i)\to E$ be the morphism of Albanese varieties induced by the restriction of $\phi_{\mathcal S}$ to $R_i$. By Corollary \ref{cor:obstruction}, the assumption above implies that the sum morphism 
$$\sum_{i=1}^n\al_i\colon\bigoplus_{i=1}^n\Alb(R_i)\longrightarrow E$$
admits a section. \par
This contradicts condition \eqref{it:albanese_S_0} of Theorem \ref{thm:intro_degeneration_S}. The resulting contradiction completes the proof.
\end{proof}

\begin{proof}[Proof of Theorem \ref{thm:main_result}] Let $E$ be an elliptic curve over $\bbC$ such that either \ref{it:trivial_end_C} or \ref{it:CM_E_C} holds for $\End(E)$. Choose a countable algebraically closed subfield $k_0\subset\bbC$ over which $E$ is defined; thus $$E\cong E_0\times_{k_{0}}\bbC$$ for some elliptic curve $E_{0}$ over $k_{0}$.\par
Let $$\ca S\longrightarrow\Delta:=\Spec k_{0}[[t]]$$ be the strictly semistable degeneration constructed in Theorem \ref{thm:intro_degeneration_S}, together with the induced morphism $$\phi_{\mathcal S} \colon \mathcal S \longrightarrow E_0 \times_{k_{0}} \Delta .$$\par
Since $k_0$ is countable, the inclusion $k_0\subset\bbC$ factors through the Laurent series field $k_0((t))$. Set $$S:=S_{k_{0}((t))}\times_{k_{0}((t))}\bbC.$$
By \eqref{it:generic_fibre} of Theorem \ref{thm:intro_degeneration_S}, the surface $S$ is a bielliptic surface of type 2, and the morphism $$\phi_{S}\colon S\longrightarrow E$$ induced by $\phi_{\ca S}$ coincides with the Albanese fibration of $S$.\par
In particular, the pushforward homomorphism $$(\phi_{S})_{\ast} \colon \CH_0(S)_{\hom} \longrightarrow E $$ agrees with the Abel--Jacobi map for $0$-cycles on $S$. It is well known that bielliptic surfaces have representable $\Ch$-group; see \cite[Example (2)]{bloch-srinivas}. Applying Theorem \ref{thm:main_result_body}, we conclude that there exists no correspondence $$
[\Gamma] \in \CH^{2}(E \times S)
$$
that induces a splitting of the Abel--Jacobi map 
\begin{equation*}
(\phi_{S})_{\ast} \colon \CH_0(S)_{\hom} \longrightarrow E.
\end{equation*}
Equivalently, the surface $S$ admits no universal $0$-cycle; cf. Definition \ref{def:universal_0-cycle}. This completes the proof of Theorem \ref{thm:main_result}.    
\end{proof}

\begin{example}\label{ex:bielliptic_uni_0_cycle}
We give explicit examples of bielliptic surfaces of type $2$ admitting a universal $0$-cycle.

Let $E$ be an elliptic curve over $\bbC$ with complex multiplication by the maximal order in the Heegner field $\Q(\sqrt{-7})$. A model defined over $\bbQ$ is given by the Weirstrass equation $$y^2z+xyz=x^3-x^2z-107xz^2+552z^3;$$ see \cite[\href{https://www.lmfdb.org/EllipticCurve/Q/49.a2}{Elliptic Curve 49.a2}]{lmfdb}. 

The ring of integers of $\Q(\sqrt{-7})$ is $\Z[\omega]$, where 
$$
\omega:=\frac{1+\sqrt{-7}}{2}.
$$
Both $\omega$ and its complex conjugate $\bar{\omega}=1-\omega$ have norm $2$, so that
$$
2=\omega\cdot\bar{\omega}.
$$

Let $g_{\omega}\in\End(E)\cong\Z[\omega]$ denote the endomorphism corresponding to $\omega$. It is an isogeny of degree $2$, and its dual isogeny $\hat g_{\omega}$ corresponds to $\bar{\omega}$. Hence
\begin{equation}\label{eq:deg_2_isog}
2\cdot\Id_{E}=g_{\omega}\circ\hat g_{\omega}, 
\qquad 
g_{\omega}+\hat g_{\omega}=\Id_E .
\end{equation}

The isogenies $g_{\omega}$ and $\hat g_{\omega}$ admit factorizations
$$
g_{\omega}\colon 
E \overset{f_{\omega}}{\longrightarrow} E/a 
\overset{\hat q_a}{\longrightarrow} E/E[2] \cong E,
\qquad
\hat g_{\omega}\colon 
E \overset{f_{\bar\omega}}{\longrightarrow} E/b 
\overset{\hat q_b}{\longrightarrow} E/E[2] \cong E,
$$
where $a,b\in E[2]$ are nontrivial $2$-torsion points. Here 
$$
q_a\colon E\longrightarrow E/a, 
\qquad 
q_b\colon E\longrightarrow E/b
$$
denote the canonical quotients, $\hat q_a$ and $\hat q_b$ are their dual isogenies, and the morphisms $f_{\omega}$ and $f_{\bar\omega}$ are isomorphisms.

Let $F$ be an elliptic curve over $\bbC$. Consider the action of the Klein four-group $\Gi=(\Z/2)^{\oplus 2}$ on $E\times F$ generated by the involutions
$$
\sigma_1(x,y)=(x+a,-y), 
\qquad 
\sigma_2(x,y)=(x+a+b,y+c),
$$
where $0\neq c\in F[2]$. The quotient
$$
S:=\bigl(E\times F\bigr)/\Gi
$$
is a bielliptic surface of type $2$.

The surface $S$ admits two natural elliptic fibrations
$$
\phi_S\colon S\longrightarrow E/E[2]\cong E,
\qquad
\psi_S\colon S\longrightarrow F/\Gi\cong \bbP^1_{\bbC}.
$$
The fibration $\phi_S$ is smooth and coincides with the Albanese morphism of $S$, while $\psi_S$ has exactly four multiple fibres of multiplicity $2$. Two of these fibres have reduced structure isomorphic to $E/a$, and the remaining two have reduced structure $E/b$.

Fix embeddings
$$
E/a\longhookrightarrow S,
\qquad
E/b\longhookrightarrow S,
$$
and consider the subvarieties
$$
\Gamma_{f_\omega}\subset E\times (E/a)\subset E\times S,
\qquad
\Gamma_{f_{\bar\omega}}\subset E\times (E/b)\subset E\times S,
$$
given by the graphs of $f_\omega$ and $f_{\bar\omega}$, respectively. Set
$$
[\Gamma]:=[\Gamma_{f_\omega}]+[\Gamma_{f_{\bar\omega}}]\in\CH^2(E\times S).
$$

We claim that $[\Gamma]$ defines a universal $0$-cycle on $S$. Indeed, the pushforward
$$
(\phi_S)_*\colon \CH_0(S)_{\hom}\longrightarrow E
$$
coincides with the Abel--Jacobi map for $0$-cycles on $S$. By construction the induced endomorphisms
$$
(\phi_S)_*\circ[\Gamma_{f_\omega}]_*,
\qquad
(\phi_S)_*\circ[\Gamma_{f_{\bar\omega}}]_*
$$
identify with the isogenies $g_\omega$ and $\hat g_\omega$ of $E$. Therefore the second identity in \eqref{eq:deg_2_isog} implies
$$
(\phi_S)_*\circ[\Gamma]_\ast=\Id_E.
$$
Thus $S$ admits a universal $0$-cycle.
\end{example}

\subsection{A counterexample to the integral Hodge conjecture} We now apply Theorem \ref{thm:main_result} to prove Corollary \ref{cor:integral_Hodge_S}. Specifically, we construct a threefold $X$ of Kodaira dimension zero for which the integral Hodge conjecture fails for $1$-cycles. The variety $X$ is defined as the product of a bielliptic surface of type $2$, which admits no universal $0$-cycle, with its Albanese variety, an elliptic curve $E$. Thus $X$ is an elliptic fibration over the abelian surface $E \times E$. In contrast to the counterexamples of Benoist and Ottem \cite{benoist-ottem}, the non-algebraic Hodge class in degree $4$ produced by our construction is non-torsion.\par
Let $X$ be a smooth projective variety over $\bbC$ of dimension $d= \dim X$. We begin by recalling how the non-existence of a universal $0$-cycle on $X$ is related to the failure of the integral Hodge conjecture. Throughout, all (co)homology groups are taken to be Betti (co)homology, and the subscript "tf" denotes the quotient by torsion.\par

The following lemma is well-known.
\begin{lemma}\label{lem:natural_identification} There is a natural isomorphism 
\begin{equation}\label{eq:natural_identification}
    \Hom\bigl(H_{1}\bigl(\Alb(X),\Z\bigr),\,H_{1}\bigl(X,\Z\bigr)_{\mathrm{tf}}\bigr)\cong H^{1}\bigl(\Alb(X),\Z\bigr)\otimes_{\Z} H^{2d-1}\bigl(X,\Z\bigr)_{\mathrm{tf}}.
\end{equation}
\end{lemma}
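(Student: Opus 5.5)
The plan is to derive the isomorphism from three standard identifications: the Künneth formula, universal coefficients, and Poincaré duality on $X$.

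\emph{Step 1.} Since $\Alb(X)$ is a complex torus, $H_{1}(\Alb(X),\Z)$ is a free abelian group of finite rank. Its $\Z$-dual is $H^{1}(\Alb(X),\Z)$ by universal coefficients; there is no Ext term because $H_0$ is free. For a finitely generated free module $M$ and any abelian group $N$ there is a natural isomorphism $\Hom(M,N)\cong M^{\vee}\otimes_{\Z}N$. Applying this gives
$$\Hom\bigl(H_{1}(\Alb(X),\Z),\,H_{1}(X,\Z)_{\mathrm{tf}}\bigr)\cong H^{1}(\Alb(X),\Z)\otimes_{\Z}H_{1}(X,\Z)_{\mathrm{tf}}.$$

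\emph{Step 2.} Identify $H_{1}(X,\Z)_{\mathrm{tf}}$ with $H^{2d-1}(X,\Z)_{\mathrm{tf}}$. Poincaré duality for the compact oriented $2d$-manifold $X(\bbC)$ gives an isomorphism $H^{2d-1}(X,\Z)\cong H_{1}(X,\Z)$ by cap product with the fundamental class. This isomorphism preserves torsion subgroups, so it induces an isomorphism on torsion-free quotients.

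\emph{Step 3.} Combine Steps 1 and 2 to obtain \eqref{eq:natural_identification}. Naturality holds because each ingredient is natural: the $\Hom$–tensor identification, universal coefficients, and cap product with $[X]$.

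The only point requiring care is making sure the torsion-free quotient is compatible with the tensor product. Since $H^{1}(\Alb(X),\Z)$ is free, tensoring with it commutes with passing to torsion-free quotients, so there is no real obstacle. For the later application, I would also note that under Künneth the right-hand side is the Künneth component of $H^{2d}(\Alb(X)\times X,\Z)_{\mathrm{tf}}$ of type $(1,2d-1)$. This is where the class of a correspondence $[\Gamma]$ lives, so that $[\Gamma]_{\ast}$ on $H_{1}$ corresponds to this component.
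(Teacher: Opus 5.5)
Your proof is correct and uses the same ingredients as the paper: Poincaré duality on $X$, universal coefficients, and the Hom--tensor identification for finitely generated free lattices. The one difference is that you apply $\Hom(M,N)\cong M^{\vee}\otimes_{\Z}N$ directly with $N=H_{1}(X,\Z)_{\mathrm{tf}}$, whereas the paper first rewrites $H^{2d-1}(X,\Z)_{\mathrm{tf}}$ as $H_{2d-1}(X,\Z)_{\mathrm{tf}}^{\vee}$ and then dualizes a tensor product, which is a slightly longer route to the same isomorphism.
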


\begin{proof} For any lattice $L$, we write
$$
L^\vee := \Hom(L, \mathbb{Z})
$$
for its dual. By Poincar\'e duality for the smooth projective variety $X$, there is a natural
isomorphism
$$
H_{1}\bigl(X,\Z\bigr)_{\mathrm{tf}} \;\cong\; H^{2d-1}\bigl(X,\Z\bigr)_{\mathrm{tf}} .
$$
The universal coefficient
theorem yields a canonical identification
$$
H^{2d-1}\bigl(X,\Z\bigr)_{\mathrm{tf}}
\;\cong\;
H_{2d-1}\bigl(X,\Z\bigr)_{\mathrm{tf}}^\vee.
$$
Combining these, we obtain
$$
\Hom\bigl(H_{1}\bigl(\Alb(X),\Z\bigr),\, H_{1}\bigl(X,\Z\bigr)_{\mathrm{tf}}\bigr)
\;\cong\;
\Hom\bigl(H_{1}\bigl(\Alb(X),\Z\bigr),\, H_{2d-1}\bigl(X,\Z\bigr)_{\mathrm{tf}}^\vee\bigr).
$$

By the tensor--Hom adjunction, this group is naturally isomorphic to
$$
\bigl(H_{1}\bigl(\Alb(X),\Z\bigr)\otimes_{\Z} H_{2d-1}\bigl(X,\Z\bigr)_{\mathrm{tf}}\bigr)^\vee.
$$
Since both $H_{1}(\Alb(X),\Z)$ and $H_{2d-1}(X,\Z)_{\mathrm{tf}}$ are finitely generated
free abelian groups, there is a canonical identification
$$
\bigl(H_{1}\bigl(\Alb(X),\Z\bigr)\otimes_{\Z} H_{2d-1}\bigl(X,\Z\bigr)_{\mathrm{tf}}\bigr)^\vee
\;\cong\;
H_{1}\bigl(\Alb(X),\Z\bigr)^\vee
\otimes_{\Z}
H_{2d-1}\bigl(X,\Z\bigr)_{\mathrm{tf}}^\vee.
$$

Finally, another application of the universal coefficient theorem identifies
$$
H_{1}\bigl(\Alb(X),\Z\bigr)^\vee\cong H^{1}\bigl(\Alb(X),\Z\bigr),
\quad
H_{2d-1}\bigl(X,\Z\bigr)_{\mathrm{tf}}^\vee\cong H^{2d-1}\bigl(X,\Z\bigr)_{\mathrm{tf}},
$$
which completes the proof.
\end{proof}

By construction, the Albanese morphism of $X$
\begin{equation}\label{eq:al_X}
    \al_{X}\colon X\longrightarrow\Alb(X)
\end{equation}
induces an isomorphism of lattices 
\begin{equation}\label{eq:iso_lattices}
    (\al_{X})_{\ast}\colon H_{1}\bigl(X,\Z\bigr)_{\tf}\overset{\sim}{\longrightarrow}H_{1}\bigl(\Alb(X),\Z\bigr).
\end{equation}
\par
Via the natural identification \eqref{eq:natural_identification} of Lemma \ref{lem:natural_identification}, the inverse of $(\al_{X})_{\ast}$ in \eqref{eq:iso_lattices} defines a cohomology class 
\begin{equation}\label{eq:Hodge_class}\delta_{X}\in H^{1}\bigl(\Alb(X),\Z\bigr)\otimes_{\Z} H^{2d-1}\bigl(X,\Z\bigr)_{\tf}\subset H^{2d}\bigl(\Alb(X)\times X,\Z\bigr)_{\text{tf}},\end{equation}
where the inclusion is induced by the Künneth decomposition.\par

By \cite[Corollary 1.6.3]{Murre}, the inverse of $(\al_{X})_{\ast}\otimes_{\Z}\Q$ is induced by a correspondence in $$\CH^{d}(\Alb(X)\times X)\otimes_{\Z}\Q.$$ It then follows from a theorem of Lieberman (see \cite[Theorem 2A11.]{Kleiman1968}) that the class $$\delta_{X}\in H^{2d}(\Alb(X)\times X,\Q)$$ is rationally algebraic.\par

We record the following lemma, which addresses the integral version of this question; see \cite[\S 2]{voisin_a}.

\begin{lemma}\label{lem:int_Hodge_conj_vs_uni_0_cycl}Let $X$ be a smooth projective variety over $\bbC$ of dimension $d = \dim X,$ and let $$\cl^d\colon\CH^{d}(\Alb(X)\times X)\longrightarrow H^{2d}(\Alb(X)\times X,\Z)_{\mathrm{tf}}$$ be the cycle class map. The class $$\delta_{X}\in H^{2d}\bigl(\Alb(X)\times X,\Z\bigr)_{\mathrm{tf}}$$ constructed in \eqref{eq:Hodge_class} is a Hodge class of degree $2d.$ Moreover, there exists a cycle $$[\Gamma]\in\CH^{d}(\Alb(X)\times X)$$ whose Künneth component of type $(1,2d-1)$ in $\cl^{d}([\Gamma])$ equals $\delta_X$ if and only if $X$ admits a universal $0$-cycle. In particular, if $X$ does not admit a universal $0$-cycle, then $\delta_{X}$ is not algebraic.
\end{lemma}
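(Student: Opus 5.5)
We will prove the lemma in two steps: first that $\delta_X$ is a Hodge class, then the equivalence between algebraicity of the $(1,2d-1)$-Künneth component and the existence of a universal $0$-cycle.

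\emph{Hodge property.} The class $\delta_X$ corresponds under \eqref{eq:natural_identification} to the homomorphism $(\al_X)_\ast^{-1}\colon H_1(\Alb(X),\Z)\to H_1(X,\Z)_{\tf}$. Here $(\al_X)_\ast$ is induced by a morphism of varieties, so it is a morphism of integral Hodge structures. Hence its inverse is one as well. Lemma \ref{lem:natural_identification} is compatible with Hodge structures once the Tate twists are tracked: Poincaré duality $H_1(X)\cong H^{2d-1}(X)(d)$ and $H_1(\Alb(X))^\vee\cong H^1(\Alb(X))$. Under this compatibility, morphisms of Hodge structures correspond exactly to Hodge classes in $H^1(\Alb(X))\otimes H^{2d-1}(X)$ of type $(d,d)$. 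This gives the first claim. The step needing the most care is checking that the identification of Lemma \ref{lem:natural_identification} agrees with the correspondence action $\gamma\mapsto q_\ast(p^\ast\gamma\cup\cl)$ on $H_1$, up to a universal sign.

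\emph{Equivalence.} For a cycle $[\Gamma]\in\CH^d(\Alb(X)\times X)$, the action $[\Gamma]_\ast\colon H_1(\Alb(X),\Z)\to H_1(X,\Z)_{\tf}$ depends only on the $(1,2d-1)$-Künneth component of $\cl^d([\Gamma])$ modulo torsion. Indeed, the other Künneth components pair trivially with $H^1$ of $\Alb(X)$ for degree reasons. Moreover, this component corresponds under \eqref{eq:natural_identification} to that action. Next we relate this to Definition \ref{def:universal_0-cycle}. On Tate modules, or on $H_1$ via the uniformization $\Alb(X)=H_1(\Alb(X),\bbC)/\ldots$, the morphism $\psi_{(\Alb(X),[\Gamma])}$ acts by $(\al_X)_\ast\circ[\Gamma]_\ast$. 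This is the standard fact that the Abel--Jacobi map of a family of $0$-cycles is computed cohomologically by the correspondence. A homomorphism of abelian varieties is determined by its action on $H_1(\,\cdot\,,\Z)$. Also, $\psi$ is a morphism sending $0$ to $0$, hence a homomorphism. Therefore $\psi_{(\Alb(X),[\Gamma])}=\Id$ if and only if $(\al_X)_\ast\circ[\Gamma]_\ast=\Id$ on $H_1$. This holds if and only if $[\Gamma]_\ast=(\al_X)_\ast^{-1}$, that is, if and only if the Künneth component equals $\delta_X$.

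\emph{Consequence.} If $\delta_X$ were algebraic, say $\delta_X=\cl^d([\Gamma])$, then the $(1,2d-1)$-component of $\cl^d([\Gamma])$ would be $\delta_X$ itself, since $\delta_X$ lies in that Künneth summand. By the equivalence, $X$ would then admit a universal $0$-cycle. The main obstacle is justifying, with integral coefficients, that $\psi_{(\Alb(X),[\Gamma])}$ is computed on $H_1$ by $(\al_X)_\ast\circ[\Gamma]_\ast$. I would prove this first for $[\Gamma]$ the graph of a morphism to $X$, using functoriality of the Albanese. I would then extend by linearity and a moving argument, or cite Voisin \cite[\S 2]{voisin_a}.
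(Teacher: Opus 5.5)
Your argument is correct. The paper gives no proof of this lemma and simply defers to \cite[\S 2]{voisin_a}, and your outline is the standard argument behind that reference. The key point is that the homomorphism $\psi_{(\Alb(X),[\Gamma])}$ acts on $H_1(\Alb(X),\Z)$ by $(\al_X)_\ast\circ[\Gamma]_\ast$, and this action depends only on the $(1,2d-1)$-K\"unneth component of $\cl^d([\Gamma])$ modulo torsion. The sign ambiguity you mention is harmless, since you may replace $\Gamma$ by $-\Gamma$.

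One caution concerns your proposed self-contained proof of that key identification. A general $\Gamma$ is not a combination of graphs of morphisms $\Alb(X)\to X$, so ``graphs plus linearity and moving'' does not reach it. Instead, pass component by component to a desingularization $\widetilde{\Gamma}$, with maps $p\colon\widetilde{\Gamma}\to\Alb(X)$ and $q\colon\widetilde{\Gamma}\to X$. Then factor $\psi$, up to translation, as the trace map $\Alb(X)\to\Alb(\widetilde{\Gamma})$ followed by $\Alb(q)$. The trace map induces the transfer $p^{\ast}$ on $H_1$ via Poincar\'e duality. Components not dominating $\Alb(X)$ contribute zero on both sides.
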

We are now in a position to prove Corollary \ref{cor:integral_Hodge_S}.

\begin{proof}[Proof of Corollary \ref{cor:integral_Hodge_S}]Let $E$ be an elliptic curve over $\bbC$ such that either \ref{it:trivial_end_C} or \ref{it:CM_E_C} holds for $\End(E)$. 
By Theorem \ref{thm:main_result}, there exists a bielliptic surface $S$ of type 2 with $\Alb(S)\cong E$ such that $S$ admits no universal $0$-cycle; cf. Definition \ref{def:universal_0-cycle}.
By Lemma \ref{lem:int_Hodge_conj_vs_uni_0_cycl}, it follows that the cohomology class $$\delta_{S}\in H^{4}\bigl(E\times S,\Z\bigr)_{\mathrm{tf}}$$ constructed in \eqref{eq:Hodge_class} is a Hodge class which is not algebraic.\par
This provides a counterexample to the integral Hodge conjecture in degree $4$
for $E\times S$.\end{proof}

\begin{example}\label{ex:counterexample_Q}
We can construct examples of surfaces as in Theorem \ref{thm:main_result} that are defined over $\bar{\bbQ}$. As a consequence, this also yields threefolds as in Corollary \ref{cor:integral_Hodge_S} which are defined over number fields. 
The argument follows the idea of Hasset and Tschinkel, namely to use
specialization to characteristic $p>0$; see \cite[Remarque 5.9]{CTVoisin2012} and \cite{totaro2013integral}.

Let $E$ be an elliptic curve over $\bbQ$ such that its geometric endomorphism ring $\End(E_{\bar{\bbQ}})$ satisfies \ref{it:CM_E}; explicit examples can be found in \cite{lmfdb}. 
Combining the construction of Remark \ref{rem:degen_over_Q} with the specialization argument of Corollary \ref{cor:obstruction'}, we obtain a bielliptic surface $S$ of type $2$ over $\bar{\bbQ}$ whose Albanese variety is isomorphic to $E_{\bar{\bbQ}}$. 

Moreover, $S$ has the following property: for every algebraically closed field extension $F/\bar{\bbQ}$, there exists no correspondence
$$
[\Gamma] \in \CH^{2}(E_F \times S_F)
$$
that induces a splitting of the Abel--Jacobi map
$$
\alpha_S \colon \Ch(S_F)_{\hom} \longrightarrow \Alb(S_F) \cong E_F.
$$
Equivalently, $S$ does not admit a universal $0$-cycle over any field extension $F/\bar{\bbQ}$; cf. Definition \ref{def:universal_0-cycle}. 
\end{example}

\begin{remark}\label{rem:no_counterexample_F_p}
It is expected that a surface $S$ defined over a finite field $\bbF_q,$ and whose Albanese variety is an elliptic curve always admits a universal $0$-cycle over $\overline{\bbF}_q$. This prediction follows from a theorem of Schoen \cite[Theorem (0.5)]{Schoen1998}. Assuming the Tate conjecture for divisors on surfaces over finite fields, Schoen's result implies that, for a prime $\ell$ distinct from the characteristic of $\bbF_q$, the cycle class map
$$
\cl^{2}\colon \CH^{2}(\Alb(S)\times S)\otimes\Z_{\ell}
\longrightarrow 
\varinjlim_{k/\bbF_q} H^{4}(\Alb(S)\times S,\Z_{\ell}(2))^{\Gal(\overline{\bbF}_q/k)}
$$
is surjective, where the limit is taken over all finite extensions $k/\bbF_q$.
\end{remark}
\section*{Acknowledgments} The author is grateful to Olivier Wittenberg and Matteo Tamiozzo for helpful comments and discussions during his visit to Universit\'e Sorbonne Paris Nord. 
\printbibliography
\end{document}